\newtheorem{theorem}{Theorem}[section]
\newtheorem{lemma}[theorem]{Lemma}
\newtheorem{corollary}[theorem]{Corollary}
\newtheorem{definition}{Definition}[section]
\newtheorem{remark}{Remark}[section]
\newtheorem{example}{Example}[section]
\begin{document}

\title{A class of CTRWs: \\ Compound fractional Poisson processes}

\author{Enrico Scalas
}

\affiliation{Dipartimento di Scienze e Tecnologie Avanzate, \\ Universit\`a del Piemonte Orientale,\\
viale T. Michel 11, 15121 Alessandria, Italy \\
enrico.scalas@mfn.unipmn.it
}

\begin{abstract}
This chapter is an attempt to present a mathematical theory of compound fractional Poisson processes. 
The chapter begins with the characterization of a well-known L\'evy process: The compound Poisson process.
The semi-Markov extension of the compound Poisson process naturally leads to the compound fractional Poisson process, where the
Poisson counting process is replaced by the Mittag-Leffler counting process also known as fractional Poisson process. 
This process is no longer Markovian and L\'evy. 
However, several analytical results are available and some of them are discussed here.
The functional limit of the compound Poisson process is an $\alpha$-stable L\'evy process, whereas in the case of the
compound fractional Poisson process, one gets an $\alpha$-stable L\'evy process subordinated to the fractional Poisson process.
\end{abstract}

\maketitle


\section{Introductory notes}

This chapter is an attempt to present a mathematical theory of compound fractional Poisson processes. It is
not completely self-contained. The proofs of some statements can be found in widely available textbooks or papers. In several cases,
freely downloadable versions of these papers can be easily retrieved.

The chapter begins with the characterization of a well-known L\'evy process: The compound Poisson process. This process is
extensively discussed in the classical books by Feller \cite{feller} and de Finetti \cite{definetti}.

The semi-Markov extension of the compound Poisson process naturally leads to the compound fractional Poisson process, where the
Poisson counting process is replaced by the Mittag-Leffler counting process also called fractional Poisson process 
\cite{hilfer,hilferanton}. 
This process is no longer Markovian and L\'evy. 
However, several analytical results are available and some of them are discussed below.

The functional limit of the compound Poisson process is an $\alpha$-stable L\'evy process, whereas in the case of the
compound fractional Poisson process, one gets an $\alpha$-stable L\'evy process subordinated to the fractional Poisson process.

I became interested in these processes as possible models for tick-by-tick financial data. The main results obtained by my co-workers
and myself are described in a review paper for physicists \cite{scalas06}.

The reader interested in Monte Carlo simulations can consult two recent papers \cite{fulger08,germano09} where algorithms are presented to
simulate the fractional compound Poisson process.

\section{Compound Poisson process and generalizations}\label{cpp}

Let $\{X_i \}_{i=1}^\infty$ be a sequence of independent and identically distributed (i.i.d.) real-valued random variables with
cumulative distribution function $F_X (x)$, and let
$N(t)$, $t \geq 0$ denote the Poisson process. Further assume that the i.i.d. sequence and the Poisson process are independent.
We have the following
\begin{definition}[Compound Poisson process]
The stochastic process
\begin{equation}
\label{compoundpoisson}
Y(t) = \sum_{i=1}^{N(t)} X_i
\end{equation}
is called compound Poisson process.
\end{definition}
Here, we shall consider the one-dimensional case only. The extension of many results to $\mathbb{R}^d$ is often 
straightforward. The compound Poisson process can be seen as a random walk subordinated to a Poisson process; in other words, it is
a random sum of independent and identically distributed random variables. It turns out that the compound Poisson
process is a L\'evy process.
\begin{definition}[L\'evy process]
A stochastic process $Y(t)$, $t \geq 0$ with $Y(0) = 0$ is a L\'evy process if the following three conditions
are fulfilled
\begin{enumerate}
\item (independent increments) if $t_1 < t_2 < \cdots < t_n$, the increments $Y(t_2) - Y(t_1), \ldots, 
Y(t_n) - Y(t_{n-1})$ are independent random variables;
\item (time-homogeneous increments) the law of the increment $Y(t + \Delta t) - Y(t)$ does not depend on $t$;
\item (stochastic continuity) $\forall a > 0$, one has that $\lim_{\Delta t \to 0} \mathbb{P} (|Y(t + \Delta t) - Y(t)| \geq a) = 0$.
\end{enumerate}
\end{definition}
Loosely speaking, one can say that L\'evy processes are stochastic processes with stationary and independent increments.
Due to Kolmogorov's extension theorem \cite{billingsley}, a stochastic process is characterized by its
finite dimensional distributions. In the case of a L\'evy process, the knowledge of the law of $Y(\Delta t)$ 
is sufficient to compute any finite dimensional distribution. Let us denote by $f_{Y(\Delta t)}(y,\Delta t)$ the probability density function
of $Y(\Delta t)$ 
\begin{equation}
\label{1pfidi}
f_{Y(\Delta t)}(y,\Delta t) \, dy \stackrel{\text{def}}{=} \mathbb{P}(Y(\Delta t) \in dy).
\end{equation}
As an example, suppose you want to know the joint density function $f_{Y(t_1),Y(t_2)}(y_1,t_1;y_2,t_2)$ defined as
\begin{equation}
\label{2pfidi}
f_{Y(t_1),Y(t_2)}(y_1,t_1;y_2,t_2)\, dy_1 \, dy_2 \stackrel{\text{def}}{=}  \mathbb{P}(Y(t_1) \in d y_1, Y(t_2) \in d y_2). 
\end{equation}
This is given by
\begin{multline}
\label{fidi}
f_{Y(t_1),Y(t_2)}(y_1,t_1;y_2,t_2) dy_1 dy_2  \stackrel{\text{def}}{=}  \mathbb{P}(Y(t_1) \in d y_1, Y(t_2) \in d y_2)  \\ 
 =  \mathbb{P}(Y(t_2) \in dy_2|Y(t_1) \in dy_1) \mathbb{P}(Y(t_1) \in dy_1) \\
 =  \mathbb{P}(Y(t_2) - Y(t_1) \in d(y_2 - y_1)) \mathbb{P}(Y(t_1) \in d y_1) \\
 =  f_{Y(t_2) - Y(t_1)}(y_2 - y_1, t_2 - t_1) f_{Y(t_1)}(y_1,t_1) dy_1 dy_2,
\end{multline}
and this procedure can be used for any finite dimensional distribution. The extension theorem shows the existence
of a stochastic process given a suitable set of finite dimensional distributions obeying Komogorov's consistency
conditions \cite{billingsley}, but not the uniqueness.
\begin{definition}[c\`adl\`ag process]
A stochastic process $Y(t)$, $t \geq 0$ is c\`adl\`ag (continu \`a droite et limite \`a gauche) if its realizations are right-continuous with left limits.
\end{definition}
A c\`adl\`ag stochastic process has realizations with jumps. Let $\bar{t}$ denote the epoch of a jump. Then, in a c\`adl\`ag process, one has
$Y(\bar{t}) = Y(t^+) \stackrel{\text{def}}{=} \lim_{t \to \bar{t}^+} Y(t)$.
\begin{definition}[Modification of a process]
A modification $Z(t)$, $t \geq 0$, of a stochastic process $Y(t)$, $t \geq 0$, is a stochastic process on the same probability space
such that $\mathbb{P}(Z(t) = Y(t)) = 1$.
\end{definition}
\begin{theorem}
Every L\'evy process has a unique c\`adl\`ag modification.
\end{theorem}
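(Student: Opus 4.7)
The plan is to split the statement into the two usual parts, uniqueness and existence, and handle uniqueness first because it is essentially a density argument, while existence is the substantive content and rests on a martingale regularization scheme applied to the exponential martingales naturally associated with a L\'evy process.

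For uniqueness, suppose $Z_1, Z_2$ are two c\`adl\`ag modifications of $Y$. By definition of a modification, for each rational $q \geq 0$ we have $\mathbb{P}(Z_1(q) = Z_2(q)) = 1$. Since $\mathbb{Q}_{\geq 0}$ is countable, there is a single event of probability one on which $Z_1(q) = Z_2(q)$ for every rational $q$. Fixing a sample path in this event and using right-continuity by approaching any real $t$ from above along the rationals, $Z_1(t) = Z_2(t)$ for all $t \geq 0$. Thus the c\`adl\`ag modification, if it exists, is indistinguishable up to a null set.

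For existence, the idea is to work on a countable dense set of times and show the paths of $Y$, restricted to $\mathbb{Q}_{\geq 0}$, admit right and left limits almost surely at every real $t$. The natural tool is the family of complex-valued processes $M^u_t = \exp(iuY(t) - t\psi(u))$, where $\psi(u)$ is defined by $\mathbb{E}[e^{iuY(\Delta t)}] = e^{\Delta t\, \psi(u)}$; stationarity and independence of the increments imply that each $M^u$ is a martingale with $\mathbb{E}|M^u_t|$ bounded on compacts. I would apply Doob's upcrossing inequality (or the standard martingale regularization theorem) to conclude that for each $u$ and almost every $\omega$, the map $q \mapsto M^u_q(\omega)$ from $\mathbb{Q}_{\geq 0}$ to $\mathbb{C}$ admits finite right and left limits at every $t \geq 0$. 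Taking the union of the exceptional null sets over a countable dense set $D \subset \mathbb{R}$ of values of $u$, I then deduce that the rational-time characteristic functions $u \mapsto e^{iuY(q)(\omega)}$ have limits as $q \downarrow t$ and $q \uparrow t$ along the rationals for all $u \in D$, which by continuity of $\hat{\mu}$ in $u$ and tightness arguments forces the existence of right and left limits of $Y(q)(\omega)$ themselves. Finally I would set $\widetilde{Y}(t) = \lim_{q \downarrow t,\, q \in \mathbb{Q}} Y(q)$ and verify, using stochastic continuity, that $\widetilde{Y}(t) = Y(t)$ almost surely for each fixed $t$, so that $\widetilde{Y}$ is a c\`adl\`ag modification of $Y$.

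The step I expect to be the main obstacle is the passage from the regularity of each scalar martingale $M^u$ to the regularity of the path $t \mapsto Y(t)$ itself. The upcrossing inequality gives me right and left limits of $M^u$ only on a $u$-dependent null set, and combining countably many values of $u$ yields right and left limits of the characteristic function in $u$, which must then be upgraded to convergence of $Y(q)(\omega)$ as a real number. Handling this cleanly requires exploiting stochastic continuity to rule out an accumulation of jumps and to show that the candidate left and right limits of $Y$ are finite, which is where the specifically L\'evy (as opposed to merely martingale) structure of the problem is indispensable.
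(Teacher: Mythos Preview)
The paper does not actually prove this theorem: its entire proof consists of a pointer to the first chapter of Sato's book. Your proposal, by contrast, sketches a genuine argument via martingale regularization of the exponential martingales $M^u_t = \exp(iuY(t) - t\psi(u))$, which is precisely the route taken in the standard references (Sato, and also Protter or Applebaum). In that sense your approach is consistent with what the paper defers to, but it goes far beyond what the paper itself supplies.

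Your outline is correct, and you have correctly flagged the one delicate point: passing from the existence of right/left limits of $q \mapsto M^u_q(\omega)$ for countably many $u$ to the existence of right/left limits of $q \mapsto Y(q)(\omega)$. Convergence of $e^{iuY(q_n)}$ for a dense set of $u$ does not by itself force convergence of $Y(q_n)$ (the sequence could wander off to infinity while the exponentials oscillate). The standard fix, which you allude to via ``tightness arguments'' and stochastic continuity, is to first establish that the rational-time paths are almost surely bounded on compacts; this follows from a maximal inequality for processes with independent increments (or from the submartingale maximal inequality applied to $|M^u|$ and an inversion). Once boundedness is in hand, convergence of $e^{iuY(q_n)}$ for two rationally independent values of $u$ already pins down the limit of $Y(q_n)$. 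Your sketch would be complete once this boundedness step is made explicit.
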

\begin{proof}
For a proof of this theorem one can see the first chapter of the book by Sato \cite{sato}.
\end{proof}
The following theorem gives a nice characterization of compound Poisson processes.
\begin{theorem}
Y(t) is a compound Poisson process if and only if it is a L\`evy process and its realizations are piecewise
constant c\`adl\`ag functions.
\end{theorem}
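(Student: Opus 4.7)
The plan is to prove the two implications separately, with the reverse direction being the substantive one.

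Forward direction: Assume $Y(t) = \sum_{i=1}^{N(t)} X_i$ is a compound Poisson process. I would verify the three Lévy axioms directly. For independent increments, note that $Y(t_j) - Y(t_{j-1}) = \sum_{i=N(t_{j-1})+1}^{N(t_j)} X_i$ involves disjoint blocks of the i.i.d. sequence indexed by the disjoint (and therefore independent) increments of $N$; combining this with the independence of $\{X_i\}$ from $N$ yields the result. Stationarity of increments follows because $Y(t+\Delta t) - Y(t)$ is a sum of a $\mathrm{Poisson}(\lambda \Delta t)$ number of i.i.d. copies of $X_1$, a law that depends only on $\Delta t$. Stochastic continuity is immediate from $\mathbb{P}(|Y(t+\Delta t)-Y(t)| \geq a) \leq \mathbb{P}(N(t+\Delta t)-N(t) \geq 1) = 1 - e^{-\lambda \Delta t} \to 0$. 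Finally, between consecutive Poisson epochs $Y$ is constant, and at the $i$-th epoch it jumps by $X_i$, so the paths are piecewise constant and càdlàg.

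Backward direction: Assume $Y$ is a Lévy process whose paths are piecewise constant càdlàg. I would define the first jump epoch $T_1 = \inf\{t>0 : Y(t) \neq 0\}$ and the first jump size $X_1 = Y(T_1)$. The core step is to show that $T_1$ is exponentially distributed. The identity of events $\{T_1 > s+t\} = \{T_1 > s\} \cap \{Y(s+u) - Y(s) = 0 \text{ for all } u \in [0,t]\}$, together with independence and stationarity of the increments of $Y$, yields the functional equation $\mathbb{P}(T_1 > s+t) = \mathbb{P}(T_1 > s)\,\mathbb{P}(T_1 > t)$. Right-continuity of the paths makes $s \mapsto \mathbb{P}(T_1 > s)$ right-continuous, forcing $\mathbb{P}(T_1 > t) = e^{-\lambda t}$ for some $\lambda \in (0,\infty)$ (the trivial cases $\lambda = 0$ or $\lambda = \infty$ being incompatible with the assumed path structure unless $Y \equiv 0$). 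I would then invoke the strong Markov property of Lévy processes at $T_1$: the shifted process $Y(T_1 + \cdot) - Y(T_1)$ is independent of $\mathcal{F}_{T_1}$ and has the same law as $Y$, inheriting the piecewise constant càdlàg property. Iterating gives i.i.d. exponential inter-jump times $T_n - T_{n-1}$ and i.i.d. jump sizes $X_n$ that are independent of the inter-jump times. Consequently $N(t) := \#\{n : T_n \leq t\}$ is a Poisson process of rate $\lambda$ independent of $(X_n)$, and piecewise constancy forces $Y(t) = \sum_{i=1}^{N(t)} X_i$.

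The main obstacle is the backward direction, specifically the rigorous derivation of the memoryless law for $T_1$ and the clean application of the strong Markov property. One must check that $T_1$ is a genuine stopping time (which follows from right-continuity together with its characterization as the hitting time of the open set $\mathbb{R} \setminus \{0\}$) and that the post-$T_1$ Lévy renewal holds; both are standard but nontrivial and can be cited from Sato's monograph. Once this decomposition is in place, the compound Poisson representation falls out immediately.
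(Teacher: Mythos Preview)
Your proposal is correct and follows the standard route; the paper itself gives no argument but simply refers the reader to Cont and Tankov, whose proof is essentially the one you outline. The one step you might make more explicit in the backward direction is why the first jump size $X_1 = Y(T_1)$ is independent of $T_1$, but you have already flagged that part of the argument as the place where a careful citation is warranted.
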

\begin{proof}
An accessible proof of this theorem can be found in the book by Cont and Tankov \cite{cont}.
\end{proof}
As a consequence of the above results, the compound Poisson process enjoys all the properties of L\'evy processes,
including the Markov property. To show that a L\'evy process has the Markov property, some further definitions
are necessary.
\begin{definition}[Filtration]
A family $\mathcal{F}_t$, $t \geq 0$ of $\sigma$-algebras is a filtration if it is non-decreasing, meaning that
$\mathcal{F}_s \subseteq \mathcal{F}_t$ for $0 \leq s \leq t$.
\end{definition}
\begin{definition}[Adapted process]
A process $Y(t)$, $t \geq 0$ is said adapted to the filtration $\mathcal{F}_t$, $t \geq 0$ if it is $\mathcal{F}_t$-measurable
for $t \geq 0$.
\end{definition}
\begin{definition}[Markov process with respect to a filtration]
A process $Y(t)$ is a Markov process with respect to the filtration $\mathcal{F}_t$, $t \geq 0$ if it is
adapted to $\mathcal{F}_t$, $t \geq 0$ and ($A \subset \mathbb{R}$)
\begin{equation}
\label{markovproperty}
\mathbb{P}(Y(t) \in A|\mathcal{F}_s) = \mathbb{P}(Y(t) \in A|Y(s)).
\end{equation}
\end{definition}
\begin{definition}[Natural filtration]
The natural filtration for a stochastic process is the family of non-decreasing $\sigma$-algebras generated by
the process itself $\{ \sigma(X(s)), \, s \in [0,t] \}$, $t \geq 0$.
\end{definition}
\begin{definition}[Markov process with respect to itself]
A process $Y(t)$, $t \geq 0$ is a Markov process with respect to itself (or simply a Markov process) if it is a
Markov process with respect to its natural filtration.
\end{definition}
The natural filtration $\{ \sigma(X(s)), \, s \in [0,t] \}$, $t \geq 0$, is a formal way to characterize the history of the process up to time $t$. 
For a Markov process, the future values do not depend on the whole history, but only on the present value of the process.
\begin{definition}[Transition probability]
Given the Markov process $Y(t)$, $t \geq 0$, its transition probability $P(y, A,\Delta t, t)$ is defined as
\begin{equation}
P(y, A,\Delta t, t) = \mathbb{P}(Y(t + \Delta t) \in A|Y(t)=y),
\end{equation}
where $A \subset \mathbb{R}$.
\end{definition}
\begin{definition}[Homogeneous Markov process]
A Markov process $Y(t)$, $t \geq 0$ is said (time)-homogeneous if its transition probability $P(y, A,\Delta t, t)$ does not depend on $t$.
\end{definition}
\begin{theorem}
A L\'evy process is a time-homogeneous Markov process with transition probability
\begin{equation}
\label{markovtrans}
P(y, A, \Delta t) = \mathbb{P}(Y(\Delta t) \in A - y) = \int_{x \in A} f_{Y(\Delta t)}(x-y, \Delta t) \, dx.
\end{equation}
\end{theorem}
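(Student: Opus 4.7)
The plan is to prove the three assertions (Markov property, time-homogeneity, explicit transition probability) in that order, with everything following from the independent and stationary increments of the Lévy process. I would take $\mathcal{F}_s = \sigma(Y(u):0\le u\le s)$ to be the natural filtration.

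First I would establish the Markov property. Fix $0\le s\le t$ and write $Y(t) = Y(s) + Z$ with $Z := Y(t)-Y(s)$. The crucial input is that $Z$ is independent of $\mathcal{F}_s$. The assumption of independent increments, as stated, only guarantees that $Y(t)-Y(s)$ is independent of each finite collection of past increments $Y(t_k)-Y(t_{k-1})$ with $t_k\le s$. To upgrade this to independence from the whole $\sigma$-algebra $\mathcal{F}_s$, I would apply a Dynkin $\pi$–$\lambda$ (monotone class) argument: the family of cylinder events built from finitely many past values is a $\pi$-system generating $\mathcal{F}_s$, and the family of events independent of $Z$ is a $\lambda$-system containing it. Once this is in hand, $\mathbb{P}(Y(t)\in A\mid\mathcal{F}_s) = \mathbb{P}(Y(s)+Z\in A\mid\mathcal{F}_s) = g(Y(s))$, where $g(y):=\mathbb{P}(Z\in A-y)$ by the ``freezing'' lemma for conditional expectations. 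Since $g(Y(s))$ is $\sigma(Y(s))$-measurable, it must equal $\mathbb{P}(Y(t)\in A\mid Y(s))$, which is the Markov property in the form of equation \eqref{markovproperty}.

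Second, for time-homogeneity and the explicit form of the transition probability, I would use the stationarity of increments together with $Y(0)=0$. For $\Delta t>0$, the law of $Y(t+\Delta t)-Y(t)$ is the same as the law of $Y(\Delta t)-Y(0)=Y(\Delta t)$. Therefore, starting from the expression for $g$ above with $s=t$ and $t\mapsto t+\Delta t$,
\begin{equation*}
P(y,A,\Delta t,t) = \mathbb{P}\bigl(Y(t+\Delta t)-Y(t)\in A-y\bigr) = \mathbb{P}\bigl(Y(\Delta t)\in A-y\bigr),
\end{equation*}
which is independent of $t$, establishing homogeneity and giving the first equality in the claimed formula. The second equality is then just the definition of the density $f_{Y(\Delta t)}$ introduced in \eqref{1pfidi}, under the tacit assumption that the one-dimensional law of $Y(\Delta t)$ admits a density (otherwise the identity should be read distributionally).

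The main obstacle, and the only step that is not essentially bookkeeping, is the upgrade from independence of increments from \emph{finite} past collections to independence from the full natural filtration $\mathcal{F}_s$. Everything else (the freezing of $Y(s)$ inside the conditional expectation, the substitution via stationarity, the rewriting as an integral against $f_{Y(\Delta t)}$) is routine once that independence is secured.
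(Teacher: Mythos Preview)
Your proposal is correct and follows essentially the same route as the paper's proof: derive the Markov property from independence of increments (rewriting $Y(t+\Delta t)\in A$ as $Y(t+\Delta t)-Y(t)\in A-Y(t)$ and conditioning), then obtain time-homogeneity and the explicit transition kernel from stationarity of increments. The paper's argument is considerably terser and simply asserts the chain of equalities without the $\pi$--$\lambda$ upgrade, the freezing lemma, or the density caveat you supply, so your version is a more rigorous rendering of the same idea rather than a different approach.
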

\begin{proof}
The Markov property is a consequence of the independence of increments. The following chain of equalities holds true
\begin{eqnarray}
\label{levymarkov1}
\mathbb{P}(Y(t+\Delta t) \in A| \mathcal{F}_t) & = & \mathbb{P}(Y(t+\Delta t) - Y(t) \in A - Y(t)| \mathcal{F}_t) \nonumber \\
& = & \mathbb{P}(Y(t+\Delta t) - Y(t) \in A - Y(t)| Y(t)).
\end{eqnarray}
We further have
\begin{equation}
\label{levymarkov2}
\mathbb{P}(Y(t + \Delta t) \in A|Y(t) = y) = \mathbb{P}(Y(\Delta t) \in A-y) = \int_{x \in A} f_{Y(\Delta t)}(x-y, \Delta t) \, dx,
\end{equation}
as a consequence of time homogeneity.
\end{proof}
This result fully justifies the passages leading to equation (\ref{fidi}).

It turns out that $f_{Y(\Delta t)}(y, \Delta t)$ can be explicitly written for a compound Poisson process. Let
$F_X (x)$ be the law of the jumps $\{X_i\}$ and let $\lambda$ denote the parameter of the Poisson
process, then we have the following
\begin{theorem}
\label{fundtheor}
The cumulative distribution function of a compound Poisson process is given by
\begin{equation}
\label{CTRWcumulative}
F_{Y(t)} (y,t) = \mathrm{e}^{-\lambda t} \sum_{n=0}^\infty \frac{(\lambda t)^n}{n!} F^{*n}_{Y_n} (y),
\end{equation}
where $F^{*n}_{Y_n} (y)$ is the $n$-fold convolution of $F_X (x)$.
\end{theorem}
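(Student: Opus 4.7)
The plan is to condition on the value of the Poisson counting process $N(t)$, exploit the assumed independence of $N(t)$ from the i.i.d. sequence $\{X_i\}$, and then recognize the conditional distribution as an $n$-fold convolution. This is the natural route because the difficulty in computing the law of $Y(t)$ is that the number of summands is itself random, and conditioning removes exactly that randomness.

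Concretely, I would first apply the law of total probability to write
\begin{equation*}
F_{Y(t)}(y,t) = \mathbb{P}(Y(t) \le y) = \sum_{n=0}^{\infty} \mathbb{P}(Y(t) \le y \,|\, N(t) = n)\, \mathbb{P}(N(t) = n).
\end{equation*}
Next, I would use the independence of $\{X_i\}_{i=1}^\infty$ from $N(t)$ to identify $\{Y(t) \le y\} \cap \{N(t) = n\}$ with $\{\sum_{i=1}^n X_i \le y\} \cap \{N(t) = n\}$, so that the conditional probability equals $\mathbb{P}(\sum_{i=1}^n X_i \le y)$. Since the $X_i$ are i.i.d.\ with CDF $F_X$, the CDF of their sum is, by a standard induction on $n$, the $n$-fold convolution $F_X^{*n}$. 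For $n=0$ the empty sum is defined to be $0$, so $F_X^{*0}(y)$ is the unit step function at the origin, consistent with the convention used in the statement.

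Finally, I would substitute the Poisson mass function $\mathbb{P}(N(t) = n) = \mathrm{e}^{-\lambda t}(\lambda t)^n/n!$ and factor out the common exponential to obtain the claimed series
\begin{equation*}
F_{Y(t)}(y,t) = \mathrm{e}^{-\lambda t} \sum_{n=0}^{\infty} \frac{(\lambda t)^n}{n!}\, F_X^{*n}(y).
\end{equation*}
Convergence and the interchange implicit in the total-probability step are unproblematic here, because each summand is nonnegative and bounded termwise by the Poisson weight, whose series sums to $1$, so dominated convergence (or monotone convergence via partial sums of $N(t)$) legitimates everything.

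The only real subtlety, and in my view the main bookkeeping obstacle rather than a deep one, is the careful handling of the $n=0$ term: one must adopt the convention that the empty sum vanishes and that $F_X^{*0}$ is the Heaviside step at $0$, so that the $n=0$ contribution correctly encodes the event $\{N(t)=0\}$ with $Y(t)=0$. Everything else is a direct consequence of the independence assumptions made at the definition stage and the elementary convolution identity for sums of independent random variables.
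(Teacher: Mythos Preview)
Your proposal is correct and follows essentially the same route as the paper: decompose according to the value of $N(t)$, use independence of $N(t)$ and the $X_i$'s to reduce to the law of a fixed-length sum, identify that law as the $n$-fold convolution $F_X^{*n}$, and plug in the Poisson weights. The only cosmetic difference is that the paper phrases the decomposition via the partition $\{Y(t)\le y\}=\bigcup_{n\ge 0}\{Y_n\le y,\,N(t)=n\}$ and countable additivity, whereas you phrase it via conditioning and the law of total probability; these are equivalent, and your explicit remarks on the $n=0$ convention and on convergence are a slight improvement in completeness.
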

\begin{proof}
Starting from $Y(0) = 0$, at time $t$, there have been $N(t)$ jumps, with $N(t)$ assuming integer values
starting from $0$ ($N(t) = 0$ means no jumps up to time $t$). To fix the ideas, suppose that $N(t) = n$. Therefore,
one has
\begin{equation}
Y(t) = \sum_{i=1}^{N(t)} X_i = \sum_{i=1}^n X_i = Y_n
\end{equation}
and, in this case,
\begin{equation}
F_{Y_n} (y) = \mathbb{P}(Y_n \leq y) = \mathbb{P} \left( \sum_{i=1}^n X_i \leq y \right) = F^{*n}_{Y_n} (y).
\end{equation}
For the Poisson process, one has
\begin{equation}
\label{poissonprob}
P(n,t) \stackrel{\text{def}}{=}  \mathbb{P}(N(t) = n) = \mathrm{e}^{-\lambda t} \frac{(\lambda t)^n}{n!}.
\end{equation}
Given the independence between $N(t)$ and the $X_i$s, one has that
\begin{equation}
\label{jointjump}
\mathbb{P}(Y_n \leq y, N(t)=n) = P(n,t) F^{*n}_{Y_n} (y) = \mathrm{e}^{-\lambda t} \frac{(\lambda t)^n}{n!} F^{*n}_{Y_n} (y).
\end{equation}
The events $\{Y_n \leq y, N(t)=n\}$ are mutually exclusive and exhaustive, meaning that
\begin{equation}
\label{eventunion}
\{ Y(t) \leq y \} = \cup_{n=0}^{\infty} \{Y_n \leq y, N(t)=n\},
\end{equation}
and that, for any $m \neq n$
\begin{equation}
\label{eventintersection}
\{Y_m \leq y, N(t)=m\} \cap \{Y_n \leq y, N(t)=n\} = \varnothing.
\end{equation}
Calculating the probability of the two sides in equation (\ref{eventunion}) and using equation (\ref{jointjump}) and the
axiom of infinite additivity
\begin{multline}
\label{infiniteadditivity}
F_{Y(t)} (y,t) = \mathbb{P}(Y(t) \leq y ) = \mathbb{P}\left(\cup_{n=0}^{\infty} \{Y_n \leq y, N(t)=n\}\right) \\
= \sum_{n=0}^{\infty} \mathbb{P}(Y_n \leq y, N(t)=n) = \sum_{n=0}^{\infty} P(n,t) F^{*n}_{Y_n} (y) \\ 
= \mathrm{e}^{-\lambda t} \sum_{n=0}^\infty \frac{(\lambda t)^n}{n!} F^{*n}_{Y_n} (y)
\end{multline}
leads to the thesis.
\end{proof}
\begin{remark}[Generality of Theorem \ref{fundtheor}]
{\em Note that the above theorem is valid for any counting process $N(t)$ in the following form}
\end{remark}
\begin{theorem}
\label{fundtheorgen}
Let $\{X \}_{i=1}^\infty$ be a sequence of i.i.d. real-valued random variables with cumulative distribution function $F_X(x)$ and let
$N(t)$, $t \geq 0$ denote a counting process independent of the previous sequence and 
such that the number of events in the interval $[0,t]$ is a
finite but arbitrary integer $n = 0, 1, \ldots$. Let $Y(t)$ denote the process
\begin{equation}
\label{compoundrenewal}
Y(t) = \sum_{i=1}^{N(t)} X_i.
\end{equation}
Then if $P(n,t) \stackrel{\text{def}}{=}  \mathbb{P}(N(t) = n)$, the cumulative distribution function of $Y(t)$ is given by
\begin{equation}
\label{cumulativecompoundrenewal}
F_{Y(t)} (y,t) = \sum_{n=0}^{\infty} P(n,t) F^{*n}_{Y_n} (y),
\end{equation}
where $F^{*n}_{Y_n} (y)$ is the $n$-fold convolution of $F_X (x)$.
\end{theorem}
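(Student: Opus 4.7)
The plan is to mimic the proof of Theorem \ref{fundtheor} essentially verbatim, stripping away every step that used the specific Poisson form $P(n,t)=\mathrm{e}^{-\lambda t}(\lambda t)^n/n!$. The argument of Theorem \ref{fundtheor} only invoked the explicit Poisson weights at the very last line, when substituting into the general expansion; every other step went through purely on the strength of (i) independence of $\{X_i\}$ from $N(t)$, (ii) i.i.d.\ of the $X_i$, and (iii) the assumption that $N(t)$ takes finite non-negative integer values. All three of these hypotheses are present in Theorem \ref{fundtheorgen}, so the proof should carry over cleanly.

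First I would condition on $\{N(t)=n\}$ and observe that on this event $Y(t)=\sum_{i=1}^{n}X_i=:Y_n$, whose law is the $n$-fold convolution $F^{*n}_{Y_n}(y)$ by the i.i.d.\ assumption; here the convention $F^{*0}_{Y_0}(y)=\mathbf{1}_{\{y\geq 0\}}$ handles the $n=0$ case (no jumps, $Y(t)=0$). Next I would use independence of $N(t)$ from the jump sequence to factor the joint probability
\begin{equation}
\mathbb{P}(Y_n\leq y,\,N(t)=n)=P(n,t)\,F^{*n}_{Y_n}(y).
\end{equation}
The hypothesis that $N(t)$ is finite-valued ensures that $\{N(t)=n\}_{n\geq 0}$ is a countable partition of the sample space, so the events $\{Y_n\leq y,\,N(t)=n\}$ are mutually exclusive and their union over $n$ equals $\{Y(t)\leq y\}$, just as in equations (\ref{eventunion})--(\ref{eventintersection}).

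Finally I would apply countable additivity to obtain
\begin{equation}
F_{Y(t)}(y,t)=\sum_{n=0}^{\infty}\mathbb{P}(Y_n\leq y,\,N(t)=n)=\sum_{n=0}^{\infty}P(n,t)\,F^{*n}_{Y_n}(y),
\end{equation}
which is the desired formula (\ref{cumulativecompoundrenewal}). No convergence issue arises because the summands are non-negative and bounded above by $P(n,t)$, and $\sum_n P(n,t)=1$ by the assumption that $N(t)$ is almost surely finite.

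There is really no main obstacle here: the only point requiring a line of care is the observation that the previous proof never used the Poissonian form of $P(n,t)$ before the final substitution, so the same chain of equalities applies for any counting process satisfying the stated finiteness and independence conditions. In effect, Theorem \ref{fundtheorgen} is best regarded as a restatement of the structural content of Theorem \ref{fundtheor}, and the proof amounts to pointing this out.
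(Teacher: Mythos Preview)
Your proposal is correct and follows exactly the approach of the paper, whose proof of Theorem~\ref{fundtheorgen} is the single sentence ``The proof of this theorem runs exactly as the proof of theorem \ref{fundtheor} without specifying $P(n,t)$.'' Your write-up simply unpacks that sentence in detail, reproducing the conditioning, independence, partition, and countable-additivity steps of Theorem~\ref{fundtheor} while leaving $P(n,t)$ generic.
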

\begin{proof}
The proof of this theorem runs exactly as the proof of theorem \ref{fundtheor} without specifying $P(n,t)$.
\end{proof}
Theorem \ref{fundtheorgen} will be useful in the next section when the Poisson process will be replaced by the
fractional Poisson process.
\begin{remark}[The $n=0$ term]
{\em For $n=0$, one assumes $F^{*0}_{Y_0} (y) = \theta (y)$ where $\theta(y)$ is the Heaviside function. Note that $P(0,t)$ is nothing else
but the survival function at $y=0$ of the counting process. Therefore, equation (\ref{cumulativecompoundrenewal}) can be 
equivalently written as}
\begin{equation}
\label{alternatecum}
F_{Y(t)} (y,t) = P(n,0) \, \theta(y) + \sum_{n=1}^{\infty} P(n,t) F^{*n}_{Y_n} (y),
\end{equation}
\end{remark}
\begin{remark}[Uniform convergence]
{\em The series (\ref{CTRWcumulative}) and (\ref{cumulativecompoundrenewal}) are uniformly convergent for $y \neq 0$ and 
for any value of $t \in (0,\infty)$ (this statement can be proved applying Weierstrass $M$ test).
For $y=0$ there is a jump in the cumulative distribution function of amplitude $P(0,t)$.}
\end{remark}
\begin{example}[The normal compound Poisson process]
{\em As an example of compound Poisson process, consider the case in which $X_i \sim \mathcal{N}(\mu,\sigma^2)$, so that
the cumulative distribution function is 
\begin{equation}
\label{normalcdf}
F_X(x) = \Phi(x|\mu,\sigma^2) = \frac{1}{2} \left( 1 + \mathrm{erf} \left( \frac{x - \mu}{\sqrt{2 \sigma^2}} \right) \right),
\end{equation}
where
\begin{equation}
\label{errorfunction}
\mathrm{erf} (x) = \frac{1}{\sqrt{\pi}} \int_0^x \mathrm{e}^{-u^2} \, du
\end{equation}
is the error function. In this case, the convolution $F_{Y_n}^{*n} (y)$ is given by $\Phi(y|n\mu,n\sigma^2)$ and one finds
\begin{equation}
\label{NCPPcdf}
F_{Y(t)}(y,t) = \mathrm{e}^{-\lambda t} \sum_{n=0}^{\infty} \frac{(\lambda t)^n}{n!} \Phi(y|n\mu,n\sigma^2).
\end{equation}}
\end{example}
\begin{corollary}
\label{corfundtheorgen}
In the same hypotheses as in Theorem \ref{fundtheorgen}, the probability density $f_{Y(t)}(y,t)$ of the process $Y(t)$ is given by
\begin{equation}
\label{probdens}
f_{Y(t)} (y,t) = P(0,t) \, \delta(y) + \sum_{n=1}^{\infty} P(n,t) f^{*n}_{Y_n} (y), 
\end{equation}
where $f^{*n}_{Y_n} (y)$ is the $n$-fold convolution of the probability density function $f_{Y_n} (y) = d F_{Y_n} (y) / dy$.
\end{corollary}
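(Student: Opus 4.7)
The strategy is to obtain the density by differentiating the cumulative distribution function in equation (\ref{cumulativecompoundrenewal}) (equivalently, the form in (\ref{alternatecum})) with respect to $y$, and then to justify taking the derivative inside the infinite sum.

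First I would write the CDF in its split form (\ref{alternatecum}),
\begin{equation*}
F_{Y(t)}(y,t) = P(0,t)\,\theta(y) + \sum_{n=1}^{\infty} P(n,t)\, F^{*n}_{Y_n}(y),
\end{equation*}
which isolates the atom at the origin contributed by the event $\{N(t)=0\}$. Formally differentiating term by term, the $n=0$ summand produces $P(0,t)\,\delta(y)$ because $d\theta/dy = \delta$ in the distributional sense, while for $n\geq 1$ the elementary identity $dF^{*n}_{Y_n}/dy = f^{*n}_{Y_n}$ (the derivative of an $n$-fold convolution of absolutely continuous CDFs is the $n$-fold convolution of the densities) produces the sum in (\ref{probdens}). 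So the content of the proof is entirely the exchange of derivative and infinite sum.

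To justify that exchange for $y\neq 0$, I would appeal to the uniform convergence remark already established in the excerpt (via the Weierstrass $M$-test) for the series (\ref{cumulativecompoundrenewal}), and apply the same $M$-test to the differentiated series: since $P(n,t)\leq 1$ and $f^{*n}_{Y_n}(y)$ is a bona fide probability density, one can dominate the tail of $\sum P(n,t)f^{*n}_{Y_n}(y)$ by the tail of a convergent numerical series (using, for instance, that $\sum P(n,t)=1$ together with a local bound on the convolution densities on any compact set avoiding the origin). Uniform convergence of the differentiated series then permits termwise differentiation, yielding (\ref{probdens}) pointwise for $y\neq 0$; the $\delta(y)$ contribution at the origin is recovered in the distributional sense from the jump $P(0,t)$ of $F_{Y(t)}$ at $y=0$ noted in the uniform-convergence remark.

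\textbf{Main obstacle.} The only real subtlety is the interchange of differentiation and summation, together with a clean handling of the atom at the origin. The first needs a uniform-bound argument on the convolution densities (which can fail near $y=0$ if the jump distribution itself has a singular density), and the second forces us to read (\ref{probdens}) as an equality of generalized functions rather than of ordinary functions. Once both points are addressed, the remaining manipulations are purely mechanical.
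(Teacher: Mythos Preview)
Your proposal is correct and follows essentially the same line as the paper's own proof: differentiate the series \eqref{cumulativecompoundrenewal} (in the split form \eqref{alternatecum}) term by term, justify the interchange for $y\neq 0$ via the Weierstrass $M$-test using boundedness of the convolution densities, and recover the $\delta$-contribution from the jump of $F_{Y(t)}$ at the origin. The paper's argument is terser but structurally identical; your added caveat about possible singularities of $f_X$ near $y=0$ is a fair refinement that the paper glosses over.
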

\begin{proof}
One has that $f_{Y(t)} (y,t) = d F_{Y(t)} (y,t)/ dy$; moreover, equation (\ref{probdens}) is the formal term-by-term derivative of equation
(\ref{cumulativecompoundrenewal}). If $y \neq 0$, there is no singular term and the series converges uniformly ($f^{*n}_{Y_n} (y)$
is bounded and Weierstrass $M$ test applies), therefore, for any $y$ it converges to the derivative of $F_{Y(t)}(y,t)$. This is true
also for $y=0$ for $n \geq 1$ and the jump in $y=0$ gives the singular term of weight $P(0,t)$ (see equation (\ref{alternatecum})).
\end{proof}
\begin{remark}[Historical news and applications]
{\em The distribution in equation (\ref{cumulativecompoundrenewal}) is also known as {\em generalized Poisson law}. This class of
distributions was studied by W. Feller in a famous work published in 1943 \cite{feller43}. It is useful to quote an excerpt from
Feller's paper, with notation adapted to the present paper.
\begin{quote}
The most frequently encountered application of the generalized Poisson distribution is to problems of the following type.
Consider independent random events for which the simple Poisson distribution may be assumed, such as: telephone calls,
the occurrence of claims in an insurance company, fire accidents, sickness, and the like. With each event there may be associated
a random variable $X$. Thus, in the above examples, $X$ may represent the length of the ensuing conversation, the sum under risk,
the damage, the cost (or length) of hospitalization, respectively. To mention an interesting example of a different type,
A. Einstein Jr. \cite{einstein} and G. Polya \cite{polya1,polya2} have studied a problem arising out of engineering practice
connected with building of dams, where the events consists of the motions of a stone at the bottom of a river; the variable
$X$ is the distance through which the stone moves down the river.

Now, if $F(x)$ is the cumulative distribution function of the variable $X$ associated with a single event, {\em then} 
$F^{*n} (x)$ is the cumulative distribution function of the accumulated variable associated with $n$ events. Hence the following equation
$$
G(x) = \mathrm{e}^{-a} \sum_{n=0}^{\infty} \frac{a^n}{n!} F^{*n} (x) 
$$
is the probability law of the sum of the variables (sum of the conversation times, total sum paid by the company, total damage, total
distance travelled by the stone, etc.).

In view of the above examples, it is not surprising that the law, or special cases of it, have been discovered, by various means 
and sometimes under disguised forms, by many authors.
\end{quote}
Indeed, the rediscovery and/or reinterpretation of equation (\ref{cumulativecompoundrenewal}) went on also after Feller's paper.
In physics, $X$ is interpreted as the position of a walker on a lattice and $N(t)$ is the number of walker jumps occured up to time $t$
\cite{montroll,scher,shlesinger,metz,klaft}. In finance, $X$ is the tick-by-tick log-return for a stock and $N(t)$ is the number of 
transactions up to time $t$ \cite{scalas06}.}
\end{remark}
The application of Fourier and Laplace transforms to equation (\ref{probdens}) leads to an equation which is known as Montroll-Weiss equation
in the physics literature \cite{montroll}. For reasons which will become clear in the following, it can also be called semi-Markov
renewal equation. Let 
$$
\widehat{f} (\kappa) = \int_{-\infty}^{+\infty} \mathrm{e}^{i \kappa x} f(x) \, dx
$$
denote the Fourier transform of $f(x)$ and
$$
\widetilde{g} (s) = \int_{0}^{\infty} \mathrm{e}^{-st} g(t) \, dt
$$
denote the Laplace transform of $g(t)$, then the following theorem holds true.
\begin{theorem}(Montroll-Weiss equation)
\label{MW}
Let $J$ denote the sojourn time of the Poisson process in $N(t) = 0$, with $F_J (t) = 1 - \mathrm{e}^{-\lambda t}$,
$f_J (t) = \lambda \mathrm{e}^{-\lambda t}$ and $P(0,t) = 1-F_J(t) = \mathrm{e}^{-\lambda t}$. We have that:
\begin{enumerate}
\item The Fourier-Laplace transform of the probability density $f_{Y(t)} (y,t)$ is given by
\begin{equation}
\label{MWI}
\widetilde{\widehat f}_{Y(t)}(\kappa,s) = \frac{1-\widetilde{f}_J (s)}{s} \frac{1}{1-\widetilde{f}_J (s) \widehat{f}_X (\kappa)}.
\end{equation}
\item The probability density function $f_{Y(t)} (y,t)$ obeys the following integral equation
\begin{equation}
\label{MWII}
f_{Y(t)} (y,t) = P(0,t) \delta(y) + \int_0^t \, dt' f_J (t-t') \int_{-\infty}^{+\infty} dy' f_{X} (y-y') f_{Y(t)} (y',t').
\end{equation}
\end{enumerate}
\end{theorem}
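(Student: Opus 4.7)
The natural starting point is the series representation of the density given in Corollary \ref{corfundtheorgen}: $f_{Y(t)}(y,t) = P(0,t)\delta(y) + \sum_{n=1}^\infty P(n,t) f^{*n}_{Y_n}(y)$. My plan for part (1) is to apply the Fourier transform in $y$ first, exploiting the convolution theorem so that $\widehat{f^{*n}_{Y_n}}(\kappa) = [\widehat{f}_X(\kappa)]^n$ and $\widehat{\delta}=1$; this collapses the series to $\widehat{f}_{Y(t)}(\kappa,t) = \sum_{n=0}^\infty P(n,t)\,[\widehat{f}_X(\kappa)]^n$. Then I would take the Laplace transform in $t$. Using the Poisson formula $P(n,t)=e^{-\lambda t}(\lambda t)^n/n!$ one computes directly $\widetilde{P(n,\cdot)}(s)=\lambda^n/(s+\lambda)^{n+1}$, which, with $\widetilde{f}_J(s)=\lambda/(s+\lambda)$, rewrites as $\widetilde{P(n,\cdot)}(s) = [\widetilde{f}_J(s)]^n\,\bigl(1-\widetilde{f}_J(s)\bigr)/s$. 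Summing the resulting geometric series in $\widetilde{f}_J(s)\widehat{f}_X(\kappa)$ immediately yields (\ref{MWI}).

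Interchanging the Laplace transform with the infinite sum needs a brief justification: for $\mathrm{Re}(s)>0$ one has $|\widetilde{f}_J(s)|<1$ and $|\widehat{f}_X(\kappa)|\le 1$, so the geometric series converges absolutely and dominated/monotone convergence applies to the nonnegative terms $P(n,t)$ after taking moduli. This is the only genuinely analytic point in the argument and will be the place to be careful, but it is routine.

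For part (2), the plan is to recognize that (\ref{MWI}) is algebraically equivalent to
\begin{equation*}
\widetilde{\widehat f}_{Y(t)}(\kappa,s) = \frac{1-\widetilde{f}_J(s)}{s} + \widetilde{f}_J(s)\,\widehat{f}_X(\kappa)\,\widetilde{\widehat f}_{Y(t)}(\kappa,s),
\end{equation*}
which one obtains by multiplying both sides of (\ref{MWI}) by $1-\widetilde{f}_J(s)\widehat{f}_X(\kappa)$ and rearranging. Then invert: the first summand on the right is the Fourier-Laplace image of $P(0,t)\delta(y)$ because $\widetilde{P(0,\cdot)}(s) = (1-\widetilde{f}_J(s))/s$ (the survival function of $J$) and $\widehat{\delta}=1$; the second summand is a product in both variables of three transforms, so by the Fourier convolution theorem in $y$ and the Laplace convolution theorem in $t$ it corresponds to the iterated convolution $\int_0^t f_J(t-t')\,dt'\int_{-\infty}^{+\infty} f_X(y-y')\,f_{Y(t)}(y',t')\,dy'$. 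This yields (\ref{MWII}).

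The main obstacle I foresee is not any single calculation but organizing the interchange-of-operations carefully: one must ensure that the double transform of the series converges absolutely so that Fubini applies in both the $n$-sum and the $(y,t)$ integrals, and one must note that the $\delta(y)$ contribution is handled by treating it as a distribution whose Fourier transform is the constant $1$. Once these standard points are in place, both (\ref{MWI}) and (\ref{MWII}) follow from the Corollary by transform/inverse-transform manipulations with no further input.
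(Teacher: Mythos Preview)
Your proposal is correct and follows essentially the same architecture as the paper: start from the series in Corollary~\ref{corfundtheorgen}, apply the Fourier transform in $y$ and the Laplace transform in $t$, sum the resulting geometric series, and then algebraically rearrange and invert to obtain the renewal equation.

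The one substantive difference lies in how you obtain $\widetilde{P}(n,s)$. You compute it directly from the explicit Poisson probabilities $P(n,t)=\mathrm{e}^{-\lambda t}(\lambda t)^n/n!$, evaluating $\int_0^\infty \mathrm{e}^{-st}\mathrm{e}^{-\lambda t}(\lambda t)^n/n!\,dt=\lambda^n/(s+\lambda)^{n+1}$ and then recognizing this as $[\widetilde{f}_J(s)]^n(1-\widetilde{f}_J(s))/s$. The paper instead invokes the renewal structure: since the sojourn times $J_i$ are i.i.d.\ (this is the content of the preceding lemma), one has $P(n,t)=P(0,\cdot)\ast f_J^{*n}(t)$, whence $\widetilde{P}(n,s)=\widetilde{P}(0,s)[\widetilde{f}_J(s)]^n$ immediately. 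Your route is more elementary and self-contained for the Poisson case; the paper's route is no harder but is distribution-free, so that the identity \eqref{LaplaceCounting} holds for \emph{any} renewal process. This matters later in the paper: the same formula is reused verbatim in the proof of Theorem~\ref{CFPPTheorem} for the Mittag-Leffler counting process, where your direct Laplace computation would have to be redone from scratch. Otherwise the two arguments coincide, including the derivation of part~2 by rearranging \eqref{MWI} and inverting the product of transforms as a double convolution.
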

In order to prove Theorem (\ref{MW}), we need the following lemma.
\begin{lemma}
Let $T_1, T_2, \ldots, T_n, \ldots$ denote the epoch of the first, the second, $\ldots$, the $n$-th, $\ldots$ event of a Poisson process, respectively.
Let $J = J_1 = T_1$ denote the initial sojourn time and, in general, let $J_i = T_i - T_{i-1}$ be the $i$-th sojourn time. Then
$\{J_i \}_{i=1}^\infty$ is a sequence of i.i.d. random variables.
\end{lemma}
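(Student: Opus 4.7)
The plan is to derive the joint probability density of the first $n$ epochs $(T_1,\ldots,T_n)$ explicitly and then perform a linear change of variables to $(J_1,\ldots,J_n)$. Once the joint density is shown to factorize as a product of $n$ identical exponential densities, the i.i.d.\ property follows immediately.

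First I would fix $n\in\mathbb{N}$ and $0<t_1<t_2<\cdots<t_n$. The event $\{T_i\in(t_i,t_i+dt_i),\ i=1,\ldots,n\}$ (with $dt_i$ small enough that the intervals are disjoint) is, modulo lower-order infinitesimals, the same as the event that there are zero Poisson events in each of the gaps $(0,t_1]$, $(t_1+dt_1,t_2]$, \ldots, $(t_{n-1}+dt_{n-1},t_n]$ and exactly one Poisson event in each of $(t_i,t_i+dt_i)$. Using the independence of increments of $N(t)$ together with the Poisson law $\mathbb{P}(N(h)=k)=e^{-\lambda h}(\lambda h)^k/k!$ on each subinterval, the probability of this event is
\begin{equation*}
e^{-\lambda t_1}\cdot\lambda\,dt_1\cdot e^{-\lambda(t_2-t_1-dt_1)}\cdot\lambda\,dt_2\cdots e^{-\lambda(t_n-t_{n-1}-dt_{n-1})}\cdot\lambda\,dt_n + o(dt_1\cdots dt_n),
\end{equation*}
which collapses, on dividing by $dt_1\cdots dt_n$ and letting the increments tend to zero, to the joint density
\begin{equation*}
f_{T_1,\ldots,T_n}(t_1,\ldots,t_n)=\lambda^n e^{-\lambda t_n},\qquad 0<t_1<\cdots<t_n.
\end{equation*}

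Next I would apply the linear change of variables $J_1=T_1$, $J_i=T_i-T_{i-1}$ for $i\geq 2$. This map is a bijection from $\{0<t_1<\cdots<t_n\}$ onto $(0,\infty)^n$ with Jacobian determinant $1$, and $t_n=s_1+s_2+\cdots+s_n$. Therefore
\begin{equation*}
f_{J_1,\ldots,J_n}(s_1,\ldots,s_n)=\lambda^n e^{-\lambda(s_1+\cdots+s_n)}=\prod_{i=1}^{n}\lambda e^{-\lambda s_i},\qquad s_i>0.
\end{equation*}
Since this joint density is the product of $n$ identical $\mathrm{Exp}(\lambda)$ marginals and $n$ is arbitrary, all finite-dimensional joints factorize, so $\{J_i\}_{i=1}^{\infty}$ is i.i.d.\ $\mathrm{Exp}(\lambda)$.

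The main obstacle is the informal infinitesimal step producing the joint density of $(T_1,\ldots,T_n)$; to make it fully rigorous one can instead work with the events $\{N(t_i^-)=i-1,\ N(t_i)=i\}$ and pass through joint distribution functions, or invoke the strong Markov property of the Poisson process at $T_1$ to obtain that $N(T_1+\cdot)-N(T_1)$ is an independent copy of $N$, then iterate to get independence of the $J_i$ and exponentiality of each one. Either route leads to the same conclusion, but the direct density computation outlined above is the most transparent and aligns with the other density-level arguments used earlier in the chapter.
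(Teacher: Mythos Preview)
Your argument is correct: the joint density of $(T_1,\ldots,T_n)$ on the simplex is indeed $\lambda^n e^{-\lambda t_n}$, the change of variables to $(J_1,\ldots,J_n)$ has unit Jacobian, and the resulting factorization into identical $\mathrm{Exp}(\lambda)$ densities yields the i.i.d.\ conclusion. Your closing paragraph correctly identifies the one soft spot (the infinitesimal derivation of the joint density) and offers two legitimate ways to harden it.

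The route, however, is genuinely different from what the paper does. The paper does not supply a proof at all: it simply refers the reader to lemma~2.1 and proposition~2.12 of Cont and Tankov. Your approach is a direct, self-contained density computation that stays at the same analytic level as the surrounding material (Fourier--Laplace transforms, explicit densities, convolution identities), so it integrates more naturally with the chapter's style than a bare citation. The trade-off is that Cont and Tankov's route, via the strong Markov property and the characterization of the Poisson process, is slightly more structural and avoids the infinitesimal heuristic altogether; your direct computation is more elementary and transparent but requires the extra care you already flagged to be fully rigorous.
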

\begin{proof}
The proof of this lemma can be derived combining lemma 2.1 and proposition 2.12 in the book by Cont and Tankov \cite{cont}.
\end{proof}
It is now possible to prove the theorem
\begin{proof} (Theorem \ref{MW})
Let us start from equation (\ref{probdens}) and compute its Fourier-Laplace transform. It is given by
\begin{equation}
\label{MWIbis}
\widetilde{\widehat f}_{Y(t)}(\kappa,s) = \widetilde{P}(0,s)  + \sum_{n=1}^{\infty} \widetilde{P} (n,s) 
\left[ \widehat{f}_X (\kappa) \right]^n.
\end{equation}
Now, we have that
\begin{equation}
\label{possum}
T_n = \sum_{i=1}^n J_i
\end{equation}
is a sum of i.i.d. positive random variables and $P(n,t) \stackrel{\text{def}}{=}  \mathbb{P}(N(t) = n)$ meaning that
there are $n$ jumps up to $t= t_n$ and no jumps in $t-t_n$. Therefore, from pure probabilistic considerations, one has that
\begin{equation}
P(n,t) = P(0,t-t_n) * f_{T_n} (t_n)
\end{equation}
and, as a consequence of equation (\ref{possum}), one further has that 
\begin{equation}
f_{T_n} (t_n) = f_J^{*n} (t_n).
\end{equation}
Therefore, one can conclude that
\begin{equation}
\label{LaplaceCounting}
\widetilde{P} (n,s) = \widetilde{P}(0,s) \left[ \widetilde{f}_J (s) \right]^n.
\end{equation}
Inserting this result in equation (\ref{MWIbis}), noting that $\widetilde{P} (0,s) =(1 - \widetilde{f}_{J} (s)) /s$,
and summing the geometric series ($f_X$ and $f_J$ are probability densities) leads to equation (\ref{MWI}):
\begin{eqnarray}
\widetilde{\widehat f}_{Y(t)}(\kappa,s) & = & \widetilde{P}(0,s) +\widetilde{P}(0,s) 
\sum_{n=1}^{\infty} \left[\widetilde{f}_J (s) \widehat{f}_X (\kappa) \right]^n \nonumber \\ 
& = & \widetilde{P}(0,s) \sum_{n=0}^{\infty}  \left[\widetilde{f}_J (s) \widehat{f}_X (\kappa) \right]^n \nonumber \\
& = & \frac{1-\widetilde{f}_J (s)}{s} \frac{1}{1-\widetilde{f}_J (s) \widehat{f}_X (\kappa)}.
\end{eqnarray} 
Equation (\ref{MWI}) can be re-written as
\begin{equation}
\widetilde{\widehat f}_{Y(t)}(\kappa,s) = \frac{1-\widetilde{f}_J (s)}{s} + \widetilde{f}_J (s) \widehat{f}_X (\kappa)
\widetilde{\widehat f}_{Y(t)}(\kappa,s);
\end{equation}
Fourier-Laplace inverting and recalling the behaviour of convolutions under Fourier-Laplace transform, immediately 
leads to equation (\ref{MWII}).
\end{proof}
\begin{remark}{\em
Theorem (\ref{MW}) was proved in the hypothesis that $N(t)$ is a Poisson process. In this case, one
has $\widetilde{P}(0,s) = 1/(\lambda + s)$ and $\widetilde{f}_J (s) = \lambda/(\lambda+s)$ and
equation (\ref{MWI}) becomes
\begin{equation}
\widetilde{\widehat{f}}_{Y(t)}(\kappa,s) = \frac{1}{\lambda - \lambda \widehat{f}_X (\kappa) +s}.
\end{equation}
The inversion of the Laplace transform yields the characteristic function of the compound Poisson process
\begin{equation}
\label{charcomppoi}
\widehat{f}_{Y(t)} (\kappa, t) = \mathbb{E}\left( e^{i \kappa Y(t)} \right) = \mathrm{e}^{-\lambda(1 - \widehat{f}_X (\kappa))t}.
\end{equation}}
\end{remark}
\begin{remark}{\em
The proof of Theorem (\ref{MW}) does not depend on the specific form of $P(0,t)$ and $f_J (t)$, provided that the
positive random variables $\{J\}_{i=1}^\infty$ are i.i.d.. Therefore, equations (\ref{MWI}) and (\ref{MWII}) are true
also in the case of general compound renewal processes starting from $Y(0) = 0$ at time $0$. }
\end{remark}

\section{Compound fractional Poisson processes}\label{cfpp}

\begin{definition}[Renewal process]
Let $\{ J \}_{i=1}^{\infty}$ be a sequence of i.i.d. positive random variables interpreted as sojourn times between
subsequent events arriving at random time. They define a renewal process whose epochs of renewal (time instants at which the
events take place) are the random times
$\{ T \}_{n=0}^{\infty}$ defined by
\begin{eqnarray}
\label{epochs}
T_0 & = & 0, \nonumber \\
T_n & = & \sum_{i=1}^n J_i.
\end{eqnarray}
\end{definition}
The name renewal process is due to the fact that at any epoch of renewal, the process starts again from the beginning.
\begin{definition}[Counting process]
Associated to any renewal process, there is a counting process $N(t)$ defined as
\begin{equation}
\label{countdef}
N(t) = \max \{n : \, T_n \leq t \}
\end{equation}
that counts the number of events up to time $t$.
\end{definition}
\begin{remark}{\em 
As mentioned in the previous section $N(t)$ is the Poisson process if and only if $J \sim \exp(\lambda)$. Incidentally,
this is the only case of L\'evy and Markov counting process related to a renewal process (see 
\c{C}inlar's book \cite{cinlar75} for a proof of this statement). }
\end{remark}
\begin{remark}{\em
In this paper, we shall assume that the counting process has c\`adl\`ag sample paths. This means that the realizations are represented
by step functions. If $t_k$ is the epoch of the $k$-th jump, we have $N(t_k^-) = k-1$ and
$N(t_k^+) = k$.}
\end{remark}
Let $\{X_i \}_{i=1}^\infty$ be a sequence of independent and identically distributed (i.i.d.) real-valued random variables and let
$N(t)$, $t \geq 0$ denote the counting process. Further assume that the i.i.d. sequence and the counting process are independent.
We have the following
\begin{definition}[Compound renewal process]
The stochastic process
\begin{equation}
\label{compoundrenewalproc}
Y(t) = \sum_{i=1}^{N(t)} X_i
\end{equation}
is called compound renewal process.
\end{definition}
\begin{remark}{\em
Again, here, it is assumed that the sample paths are represented by c\`adl\`ag step functions. Compound renewal
processes generalize compound Poisson processes and they are called {\em continuous-time random walks}
in the physical literature.}
\end{remark}
\begin{remark}{\em
As compound renewal processes are just Markov chains (actually, random walks) subordinated to a counting process, their
existence can be proved as a consequence of the existence of the corresponding discrete-time random walks and counting
processes.}
\end{remark}
In general, compound renewal processes are non-Markovian, but they belong to the wider class of semi-Markov
processes \cite{cinlar75,flomenbom05,flomenbom07,janssen07,germano09}.
\begin{definition}[Markov renewal process]
A Markov renewal process is a two-component Markov chain $\{Y_n, T_n\}_{n=0}^{\infty}$,
where $Y_n$, $n \geq 0$ is a Markov chain and $T_n$, $n \geq 0$ is the $n$-th epoch
of a renewal process, homogeneous with respect to the second component and with
transition probability defined by 
\begin{equation}
\label{semimarkov1}
\mathbb{P}(Y_{n+1} \in A, J_{n+1} \leq t| Y_0, \ldots Y_{n}, J_1, \ldots, J_n) = 
\mathbb{P}(Y_{n+1} \in A, J_{n+1} \leq t| Y_{n}),
\end{equation}
where $A \subset \mathbb{R}$ is a Borel set and, as usual, $J_{n+1} = T_{n+1} - T_n$.
\end{definition}
\begin{remark}{\em
In this paper, homogeneity with respect to the first component will be assumed as well. 
Namely, if $Y_{n} = x$, the probability on the right-hand side of
equation (\ref{semimarkov1}) does not explicitly depend on $n$.}
\end{remark}
\begin{remark}[Semi-Markov kernel]{\em
The positive function $Q(x,A,t) = \mathbb{P}(Y_{n+1} = y \in A, J_{n+1} \leq t| Y_{n} =x)$, 
with $x \in \mathbb{R}$, $A \subset \mathbb{R}$ a Borel set,
and $t \geq 0$ is called semi-Markov kernel.}
\end{remark}
\begin{definition}[Semi-Markov process]
Let $N(t)$ denote the counting process defined as in equation (\ref{countdef}), the stochastic
process $Y(t)$ defined as
\begin{equation}
Y(t) = Y_{N(t)}
\end{equation}
is the semi-Markov process associated to the Markov renewal process $Y_n,T_n$, $n \geq 0$.
\end{definition}
\begin{remark}{\em
In equation (\ref{countdef}), $\max$ is used instead of the more general $\sup$ as 
only processes with finite (but arbitrary) number of jumps in $(0,t]$ are considered here. }
\end{remark}
\begin{theorem}
Compound renewal processes are semi-Markov processes with semi-Markov kernel given by
\begin{equation}
\label{crpkernel}
Q(x,A,t) = P(x,A) F_J (t),
\end{equation}
where $P(x,A)$ is the Markov kernel (a.k.a. Markov transition function or transition
probability kernel) of the random walk
\begin{equation}
P(x, A) \stackrel{\text{def}}{=} \mathbb{P}(Y_{n+1} \in A| Y_{n} = x),
\end{equation}
and $F_J (t)$ is the probability distribution function of sojourn times. Moreover,
let $f_X (x)$ denote the probability density function of jumps, one has
\begin{equation}
\label{rwkernel}
P(x, A) = \int_{A-x} f_J (u) \, du,
\end{equation}
where $A -x$ is the set of values in A translated of $x$ towards left.
\end{theorem}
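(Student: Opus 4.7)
The plan is to write $Y(t)=Y_{N(t)}$ with $Y_n=\sum_{i=1}^n X_i$ and then verify the Markov renewal property for the two-component chain $\{Y_n,T_n\}$, from which the explicit form of the semi-Markov kernel follows by an independence argument. First I would observe that $Y_n$ is a random walk, hence a homogeneous Markov chain, and that the sojourn times $J_n=T_n-T_{n-1}$ are i.i.d.\ and independent of the $X_i$ by the standing hypothesis that the i.i.d.\ jump sequence and the counting process are independent. Because $Y_{n+1}-Y_n=X_{n+1}$ is independent of $(Y_0,\ldots,Y_n,J_1,\ldots,J_n)$ and because $J_{n+1}$ is, by the renewal definition, independent of $(J_1,\ldots,J_n)$ and of the jump sequence, the pair $(Y_{n+1},J_{n+1})$ is conditionally independent of the past given $Y_n$; this yields the defining identity \eqref{semimarkov1} and shows that $\{Y_n,T_n\}$ is a (homogeneous) Markov renewal process, so $Y(t)=Y_{N(t)}$ is the associated semi-Markov process in the sense of the definition just stated.

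Next I would compute the kernel directly:
\begin{equation*}
Q(x,A,t)=\mathbb{P}(Y_{n+1}\in A,\,J_{n+1}\le t\mid Y_n=x)
=\mathbb{P}(x+X_{n+1}\in A,\,J_{n+1}\le t).
\end{equation*}
Here the key step is to use independence of $X_{n+1}$ and $J_{n+1}$ (which follows from the global independence between $\{X_i\}$ and the renewal sequence $\{J_i\}$) to factor the joint probability into
\begin{equation*}
Q(x,A,t)=\mathbb{P}(X_{n+1}\in A-x)\,\mathbb{P}(J_{n+1}\le t)=P(x,A)\,F_J(t),
\end{equation*}
which is exactly \eqref{crpkernel}.

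For the second assertion, the Markov kernel of the embedded random walk is obtained by the same change of variable: conditional on $Y_n=x$, the event $\{Y_{n+1}\in A\}$ coincides with $\{X_{n+1}\in A-x\}$, and integrating the density $f_X$ of $X_{n+1}$ over the translated set gives
\begin{equation*}
P(x,A)=\int_{A-x} f_X(u)\,du,
\end{equation*}
i.e.\ the formula \eqref{rwkernel} (with the understanding that the integrand on the right-hand side is the jump density $f_X$).

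I expect the only genuine obstacle to be a careful bookkeeping of the independence structure, in particular the justification that $X_{n+1}$ is independent not just of the past jumps but also of the whole sequence of sojourn times — so that the factorisation producing $P(x,A)F_J(t)$ is truly valid. Everything else is routine: the Markov property of the random walk is immediate, homogeneity in the second component follows from the $J_i$ being identically distributed, and the integral representation of $P(x,A)$ is a one-line translation.
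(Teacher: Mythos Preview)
Your proposal is correct and follows the same line of argument as the paper: verify that $\{Y_n,T_n\}$ is a Markov renewal process, use independence of the jump sequence and the renewal sequence to factor the kernel, and read off the random-walk transition kernel by the translation $\{Y_{n+1}\in A\}=\{X_{n+1}\in A-x\}$. The paper's own proof is considerably terser (it simply invokes ``by construction'' and ``independence''), so your more explicit bookkeeping of the conditional-independence structure is a welcome elaboration; your parenthetical remark that the integrand in \eqref{rwkernel} should read $f_X$ rather than $f_J$ is also well taken.
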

\begin{proof}
The compound renewal process is a semi-Markov process by construction, where 
the couple $Y_n, T_n$, $n \geq 0$ defining the corresponding Markov renewal process
is made up of a random walk $Y_n$, $n \geq 0$ with $Y_0 = 0$ and a renewal process 
with epochs given by $T_n$, $n \geq 0$ with $T_0 = 0$. Equation (\ref{crpkernel})
is an immediate consequence of the independence between the random walk and
the renewal process. Finally, equation (\ref{rwkernel}) is the standard 
Markov kernel of a random walk whose jumps are i.i.d. random variables
with probability density function $f_X (x)$. 
\end{proof}
\begin{remark}{\em 
As a direct consequence of the previous theorem, if the law of the couple $X_n, J_n$ has a joint probability density function 
$f_{X,J}(x,t) = f_X(x) f_J (t)$, then one has
\begin{eqnarray}
\label{crpkernelbis}
\mathbb{P}(Y_{n+1} \in A, J_{n+1} \leq t| Y_n) & = & Q(x,A,t) = P(x,A) F_J(t) =  \nonumber \\
& = & \int_{A} f_X(u) \, du \int_{0}^{t}f_J (v)\,dv. 
\end{eqnarray}}
\end{remark}
\begin{theorem}(Semi-Markov renewal equation) The probability density function $f_{Y(t)} (y,t)$ of a compound
renewal process obeys the semi-Markov renewal equation
\begin{equation}
\label{semimarkovrenewal}
f_{Y(t)} (y,t) = P(0,t) \delta(y) + \int_0^t \, dt' f_J (t-t') \int_{-\infty}^{+\infty} dy' f_{X} (y-y') f_{Y(t)} (y',t').
\end{equation}
\end{theorem}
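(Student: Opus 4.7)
The plan is to mirror the Fourier--Laplace argument used in Theorem \ref{MW}, observing that the proof there never actually used the exponential form of $f_J$, only the i.i.d.\ nature of the sojourn times $\{J_i\}$ and the relation $P(0,t) = 1 - F_J(t)$. Since Corollary \ref{corfundtheorgen} already gives the density expansion
\begin{equation*}
f_{Y(t)}(y,t) = P(0,t)\,\delta(y) + \sum_{n=1}^{\infty} P(n,t)\, f^{*n}_{Y_n}(y),
\end{equation*}
the strategy is to transform this series, sum it in closed form, and then invert.

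First I would apply the Fourier transform in $y$ and the Laplace transform in $t$ to obtain
\begin{equation*}
\widetilde{\widehat{f}}_{Y(t)}(\kappa,s) = \widetilde{P}(0,s) + \sum_{n=1}^{\infty} \widetilde{P}(n,s) \left[\widehat{f}_X(\kappa)\right]^n,
\end{equation*}
using that the $n$-fold spatial convolution becomes the $n$-th power of $\widehat{f}_X$. Next, I would exploit the renewal structure: because $T_n = \sum_{i=1}^n J_i$ with $J_i$ i.i.d., one has $f_{T_n} = f_J^{*n}$, and since $\{N(t)=n\}$ means the $n$-th renewal occurred at some $t_n \le t$ followed by no renewal in $(t_n,t]$, the decomposition $P(n,t) = (P(0,\cdot) * f_J^{*n})(t)$ yields $\widetilde{P}(n,s) = \widetilde{P}(0,s)\,[\widetilde{f}_J(s)]^n$ in Laplace space. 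Combined with the elementary identity $\widetilde{P}(0,s) = (1-\widetilde{f}_J(s))/s$, summing the resulting geometric series in $\widetilde{f}_J(s)\widehat{f}_X(\kappa)$ (convergent since both factors have modulus strictly less than one for $s>0$) produces the Montroll--Weiss form
\begin{equation*}
\widetilde{\widehat{f}}_{Y(t)}(\kappa,s) = \frac{1-\widetilde{f}_J(s)}{s}\,\frac{1}{1-\widetilde{f}_J(s)\widehat{f}_X(\kappa)}.
\end{equation*}

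To finish, I would rewrite this identity in the algebraically equivalent fixed-point form
\begin{equation*}
\widetilde{\widehat{f}}_{Y(t)}(\kappa,s) = \widetilde{P}(0,s) + \widetilde{f}_J(s)\,\widehat{f}_X(\kappa)\,\widetilde{\widehat{f}}_{Y(t)}(\kappa,s),
\end{equation*}
and invert: Laplace inversion turns the product $\widetilde{f}_J(s)\,\widetilde{\widehat{f}}_{Y(t)}(\kappa,s)$ into a time convolution $\int_0^t f_J(t-t')\,\widehat{f}_{Y(t)}(\kappa,t')\,dt'$, while Fourier inversion turns $\widehat{f}_X(\kappa)\,\widehat{f}_{Y(t)}(\kappa,t')$ into the spatial convolution $\int_{-\infty}^{+\infty} f_X(y-y')\,f_{Y(t)}(y',t')\,dy'$, yielding exactly equation (\ref{semimarkovrenewal}).

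I do not anticipate a serious obstacle, since the Poisson-specific inputs in the proof of Theorem \ref{MW} (namely $f_J(t)=\lambda e^{-\lambda t}$ and $P(0,t)=e^{-\lambda t}$) are only used at the very end to specialize; the derivation of the Montroll--Weiss identity itself only requires i.i.d.\ sojourn times and the relation $P(0,t)=1-F_J(t)$, both of which hold for any compound renewal process starting at $Y(0)=0$. The mildly delicate point is justifying termwise Fourier--Laplace transformation of the series in Corollary \ref{corfundtheorgen}, but this follows from the uniform convergence already noted after Theorem \ref{fundtheorgen} together with $|\widehat{f}_X(\kappa)|\le 1$ and $\widetilde{P}(n,s)\ge 0$, so Fubini applies.
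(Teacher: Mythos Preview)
Your argument is correct, but the paper proves this theorem by a genuinely different route: a direct probabilistic first-step decomposition rather than a transform calculation. The paper conditions on the first sojourn time, splitting $\mathbb{P}(Y(t)\in dy\mid Y(0)=0)$ into the event $\{J_1>t\}$, which yields $P(0,t)\,\delta(y)\,dy$ immediately, and the event $\{J_1\le t\}$, which is handled by conditioning on $(Y_1,J_1)=(y',t')$ and invoking the spatial--temporal homogeneity of the compound renewal process to write $\mathbb{P}(Y(t)\in dy\mid Y(t')=y')=f_{Y(t)}(y-y',t-t')\,dy$; integrating over $(y',t')$ gives the convolution term directly.

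Your approach essentially repeats the proof of Theorem~\ref{MW} in the general renewal setting, which the paper already observed is legitimate in the Remark following that theorem. What your route buys is economy: one argument covers both the Poisson and the general case, and the Montroll--Weiss identity drops out along the way. What the paper's direct proof buys is that it derives the renewal equation purely from the semi-Markov structure, without passing through the series expansion of Corollary~\ref{corfundtheorgen} or any transform inversion; the semi-Markov property is visibly doing the work, which fits the theorem's name.
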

\begin{proof}
By definition, one has that
\begin{equation}
\label{smproof1}
\mathbb{P}(Y(t) \in dy| Y(0) = 0) = f_{Y(t)}(y,t) dy,
\end{equation}
and that
\begin{eqnarray}
\label{smproof2}
\mathbb{P}(Y(t) \in dy| Y(t') = y') & = & \mathbb{P}(Y(t-t') \in dy| Y(0) = y') = \nonumber \\
\mathbb{P}(Y(t-t') - y' \in dy| Y(0) = 0) & = & f_{Y(t)}(y-y',t-t') dy,
\end{eqnarray}
because the increments in time and space are i.i.d. and hence
homogeneous. From equation \eqref{crpkernelbis}, one further has
\begin{equation}
\label{smproof3}
\mathbb{P}(Y_{1} \in dy, J_{1} \in dt| Y_0=0) = f_X(y) f_J(t) dy dt.
\end{equation}
Now, the probability in equation \eqref{smproof1} can be decomposed into two mutually exclusive parts,
depending on the behaviour of the first interval
\begin{eqnarray}
\label{smproof4}
\mathbb{P}(Y(t) \in dy| Y(0) = 0) & = & \nonumber \\ 
\mathbb{P}(Y(t) \in dy, J_1 > t| Y(0) = 0) & +  & \mathbb{P}(Y(t) \in dy, J_1 \leq t| Y(0) = 0).
\end{eqnarray}
The part with no jumps up to time $t$ immediately gives
\begin{equation}
\label{smproof5}
\mathbb{P}(Y(t) \in dy, J_1 > t| Y(0) = 0) = P(0,t) \delta(y) dy,
\end{equation}
whereas the part with jumps becomes
\begin{multline}
\label{smproof6}
\mathbb{P}(Y(t) \in dy, J_1 \leq t| Y(0) = 0) = \\
\int_{-\infty}^{+\infty} \int_{0}^{t}
\mathbb{P}(Y(t) \in dy|Y(t') = y') \mathbb{P}(Y_1 \in dy',J_1 \in dt') = \\
\int_{-\infty}^{+\infty} \int_{0}^{t} f_{Y(t)}(y-y',t-t') dy f_X(y') f_J(t') dy' dt' = \\
\left[ \int_{-\infty}^{+\infty} \int_{0}^{t} f_{Y(t)}(y-y',t-t') f_X(y') f_J(t') dy' dt' \right] dy
\end{multline}
as a consequence of Bayes' formula and of equations \eqref{smproof2} and \eqref{smproof3}. A replacement
of equations \eqref{smproof1}, \eqref{smproof5}, \eqref{smproof6} into equation \eqref{smproof4} and a 
rearrangement of the convolution variables straightforwardly lead to the thesis \eqref{semimarkovrenewal}. 
\end{proof}
\begin{remark}{\em
Note that the semi-Markov renewal equation \eqref{semimarkovrenewal} does coincide with the Montroll-Weiss
equation \eqref{MWII} as anticipated.}
\end{remark}
\begin{definition}[Mittag-Leffler renewal process]
The sequence $\{ J_i \}_{i=1}^\infty$ of positive independent and identically distributed random
variables whose complementary cumulative distribution function $P_\beta (0,t)$ is given by
\begin{equation}
\label{MLsurvival}
P_\beta (0,t) = E_{\beta} ( - t^{\beta})
\end{equation}
defines the so-called Mittag-Leffler renewal process.
\end{definition}
\begin{remark}{\em
The one-parameter Mittag-Leffler function in \eqref{MLsurvival} is a straightforward generalization of the
exponential function. It is given by the following series
\begin{equation}
\label{MLfunction}
E_\beta (z) = \sum_{n=0}^{\infty} \frac{z^n}{\Gamma(\beta n +1)},
\end{equation}
where $\Gamma(z)$ is Euler's Gamma function. The Mittag-Leffler function coincides with the exponential
function for $\beta =1$. The function $E_{\beta}(-t^{\beta})$ is completely monotonic and it is $1$ for
$t=0$. Therefore, it is a legitimate survival function.  }
\end{remark}
\begin{remark}{\em 
The function $E_\beta (-t^\beta)$ is approximated by a stretched exponential for $t \to 0$:
\begin{equation}
\label{MLlowt}
E_\beta (-t^\beta) \simeq 1 - \frac{t^\beta}{\Gamma(\beta+1)} \simeq \mathrm{e}^{-t^\beta/\Gamma(\beta+1)},\;\; 
\mathrm{for}\,\, 0 < t \ll 1,
\end{equation}
and by a power-law for $t \to \infty$:
\begin{equation}
\label{MLlarget}
E_\beta (-t^\beta) \simeq \frac{\sin(\beta \pi)}{\pi} \frac{\Gamma(\beta)}{t^\beta},\;\; 
\mathrm{for}\,\, t \gg 1.
\end{equation}}
\end{remark}
\begin{remark}{\em 
For applications, it is often convenient to include a scale factor in the definition
\eqref{MLsurvival}, so that one can write
\begin{equation}
\label{MLsurvivalScaled}
P(0,t) = E_{\beta} \left( - ( t/\gamma_t )^{\beta} \right).
\end{equation}
As the scale factor can be introduced in different ways, the reader is warned to pay attention to
its definition. The assumption $\gamma_t =1$ made in \eqref{MLsurvival} is equivalent to a change of time unit.}
\end{remark}
\begin{theorem} (Mittag-Leffler counting process - fractional Poisson process) 
\label{CFPPTheorem}
The counting process $N_\beta (t)$ associated to the
renewal process defined by equation \eqref{MLsurvival} has the following distribution
\begin{equation}
\label{MLcounting}
P_{\beta} (n,t) = \mathbb{P} (N_\beta (t) = n) = \frac{t^{\beta n}}{n!} E_{\beta}^{(n)} (-t^\beta),
\end{equation}
where $E_{\beta}^{(n)} (-t^\beta)$ denotes the $n$-th derivative of $E_{\beta} (z)$ evaluated
at the point $z = -t^\beta$.
\end{theorem}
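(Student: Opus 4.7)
My plan is to identify $P_\beta(n,t)$ by its Laplace transform, building on the general renewal identity \eqref{LaplaceCounting}, which the proof of Theorem \ref{MW} already established under only the hypothesis that the sojourn times $\{J_i\}$ are i.i.d.\ positive. Since the Mittag-Leffler renewal process satisfies exactly that hypothesis, I may write
\begin{equation}
\widetilde{P}_\beta(n,s) \;=\; \widetilde{P}_\beta(0,s)\,\bigl[\widetilde{f}_J(s)\bigr]^n,
\end{equation}
so the entire theorem reduces to (i) computing the two Laplace transforms on the right, and (ii) inverting the resulting expression to recognize $t^{\beta n} E_\beta^{(n)}(-t^\beta)/n!$.

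For step (i), I would substitute the series \eqref{MLfunction} into $P_\beta(0,t)=E_\beta(-t^\beta)$ and transform term by term. The transforms $\int_0^\infty e^{-st} t^{\beta k}\,dt = \Gamma(\beta k+1)/s^{\beta k+1}$ make the Gammas in the denominators cancel, leaving a geometric series summing to $\widetilde{P}_\beta(0,s) = s^{\beta-1}/(1+s^\beta)$. Because $f_J(t)=-\tfrac{d}{dt}P_\beta(0,t)$ and $P_\beta(0,0)=1$, the Laplace transform of the derivative gives $\widetilde{f}_J(s) = 1 - s\,\widetilde{P}_\beta(0,s) = 1/(1+s^\beta)$. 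Combining, I obtain the compact form
\begin{equation}
\label{plan-LT}
\widetilde{P}_\beta(n,s) \;=\; \frac{s^{\beta-1}}{(1+s^\beta)^{n+1}}.
\end{equation}

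For step (ii), the cleanest route is to expand $(1+s^\beta)^{-(n+1)}$ by the generalized binomial theorem (valid for $s^\beta>1$, and then extend by analytic continuation), which yields
\begin{equation}
\widetilde{P}_\beta(n,s) \;=\; \sum_{m=0}^{\infty} (-1)^m \binom{m+n}{m}\, \frac{1}{s^{\beta(m+n)+1}}.
\end{equation}
Inverting term by term reproduces
\begin{equation}
P_\beta(n,t) \;=\; \sum_{m=0}^{\infty} \frac{(-1)^m (m+n)!}{m!\,n!\,\Gamma(\beta(m+n)+1)}\, t^{\beta(m+n)},
\end{equation}
which is precisely the series obtained by differentiating $E_\beta(z)=\sum_k z^k/\Gamma(\beta k+1)$ $n$ times, evaluating at $z=-t^\beta$, and multiplying by $t^{\beta n}/n!$. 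This matches \eqref{MLcounting}.

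The routine parts are the two Laplace transforms and the binomial expansion; the delicate point I would be most careful about is justifying the termwise interchanges of summation and Laplace transform (and of summation and differentiation for the Mittag-Leffler series). Both are handled by the fact that $E_\beta(-t^\beta)$ is entire in $z=-t^\beta$ with coefficients bounded by $1/\Gamma(\beta k+1)$, so the series converges absolutely and uniformly on compact sets; the corresponding Laplace series converges for $\mathrm{Re}\,s$ sufficiently large, and uniqueness of the Laplace transform then transfers the identity to all $t\ge 0$. Once these exchanges are justified, the rest is bookkeeping.
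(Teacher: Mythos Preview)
Your approach is correct and coincides with the paper's: both invoke the renewal identity \eqref{LaplaceCounting}, compute $\widetilde{P}_\beta(0,s)=s^{\beta-1}/(1+s^\beta)$ and $\widetilde{f}_{J,\beta}(s)=1/(1+s^\beta)$, and arrive at $\widetilde{P}_\beta(n,s)=s^{\beta-1}/(1+s^\beta)^{n+1}$. The only difference is that the paper delegates both the Laplace transform of $E_\beta(-t^\beta)$ and the final inversion to Podlubny's book (equation~(1.80) there), whereas you carry them out explicitly via termwise transforms and a binomial expansion---more self-contained, but otherwise the same argument.
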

\begin{proof}
The Laplace transform of $P(0,t)$ is given by \cite{podlubny}
\begin{equation}
\label{MLLaplace1}
\widetilde{P}_\beta (0,s) = \frac{s^{\beta-1}}{1+s^\beta},
\end{equation}
as a consequence, the Laplace transform of the probability density function $f_{J,\beta} (t) = - d P_{\beta} (0,t) / dt$ is given by
\begin{equation}
\label{MLLaplace2}
\widetilde{f}_{J,\beta} (s) = \frac{1}{1+s^\beta};
\end{equation}
recalling equation \eqref{LaplaceCounting}, one immediately has
\begin{equation}
\label{LaplaceFracCounting}
\widetilde{P}_{\beta} (n,s) = \frac{1}{(1+s^\beta)^n} \frac{s^{\beta-1}}{1+s^\beta}.
\end{equation}
Using equation (1.80) in Podlubny's book \cite{podlubny} for the inversion of the Laplace transform
in \eqref{LaplaceFracCounting}, one gets the thesis \eqref{MLcounting}.
\end{proof}
\begin{remark}{\em 
The previous theorem was proved by Scalas {\em et al.} \cite{scalas04,mainardi04}. Notice that $N_1(t)$ is the
Poisson process with parameter $\lambda =1$. Recently, Meerschaert {\em et al.} 
\cite{meerschaert10} proved that the fractional Poisson process $N_\beta (t)$ coincides with the process defined
by $N_1 (E(t))$ where $E(t)$ is the functional inverse of the standard $\beta$-stable subordinator. The latter
process was also known as fractal time Poisson process. This result unifies different approaches to fractional
calculus \cite{beghin,meerschaert09}.}
\end{remark}
\begin{remark}{\em 
\label{markovlevy}
For $0 < \beta < 1$, the fractional Poisson process is semi-Markov, but not Markovian and is not L\'evy. The process
$N_\beta (t)$ is not Markovian as the only Markovian counting process is the Poisson process \cite{cinlar75}. It is not
L\'evy as its distribution is not infinitely divisible.   }
\end{remark}
\begin{definition}[Compound fractional Poisson process]
With the usual hypotheses, the process
\begin{equation}
\label{CFPP}
Y_\beta (t) = Y_{N_{\beta} (t)} = \sum_{i=1}^{N_{\beta}(t)} X_i 
\end{equation}
is called compound fractional Poisson process.
\end{definition}
\begin{remark}{\em 
The process $Y_1 (t)$ coincides with the compound Poisson process of parameter $\lambda =1$.}
\end{remark}
\begin{theorem}
Let $Y_\beta (t)$ be a compound fractional Poisson process, then
\begin{enumerate}
\item its cumulative distribution function $F_{Y_\beta (t)} (y,t)$ is given by
\begin{equation}
\label{CFPPcdf}
F_{Y_\beta (t)} (y,t) = E_\beta (-t^\beta) \theta (y) + \sum_{n=1}^\infty \frac{t^{\beta n}}{n!} E_{\beta}^{(n)} (-t^\beta) F_{Y_n}^{*n} (y);
\end{equation}
\item its probability density $f_{Y_\beta (t)} (y,t) $ function is given by
\begin{equation}
\label{CFPPpdf}
f_{Y_\beta (t)} (y,t) = E_\beta (-t^\beta) \delta (y) + \sum_{n=1}^\infty \frac{t^{\beta n}}{n!} E_{\beta}^{(n)} (-t^\beta) f_{Y_n}^{*n} (y);
\end{equation}
\item its characteristic function $\widehat{f}_{Y_\beta (t)} (\kappa,t)$ is given by
\begin{equation}
\label{CFPPcf}
\widehat{f}_{Y_\beta (t)} (\kappa,t) = E_\beta \left[ t^\beta(\widehat{f}_X(\kappa)-1) \right].
\end{equation}
\end{enumerate}
\end{theorem}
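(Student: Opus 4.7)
The plan is to reduce each of the three claims to results already established earlier in the paper, with the Mittag-Leffler counting distribution from Theorem \ref{CFPPTheorem} supplying the extra input.

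For part (i), I would invoke Theorem \ref{fundtheorgen} directly. The compound fractional Poisson process $Y_\beta(t)$ is precisely the compound renewal process of equation \eqref{compoundrenewal} with the counting process taken to be the Mittag-Leffler counting process $N_\beta(t)$. Substituting the counting distribution $P_\beta(n,t) = (t^{\beta n}/n!)\, E_\beta^{(n)}(-t^\beta)$ from \eqref{MLcounting} into equation \eqref{cumulativecompoundrenewal}, and isolating the $n=0$ term (where $F_{Y_0}^{*0}(y)=\theta(y)$ and $P_\beta(0,t)=E_\beta(-t^\beta)$ by the definition \eqref{MLsurvival}), yields \eqref{CFPPcdf}. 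Part (ii) then follows in exactly the same manner from Corollary \ref{corfundtheorgen}: the $n=0$ term contributes $E_\beta(-t^\beta)\,\delta(y)$ and the remaining series is just term-by-term differentiation of the CDF, which is legitimate by the uniform convergence argument already invoked in the proof of that corollary.

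For part (iii) I would take the Fourier transform of \eqref{CFPPpdf}. The delta contributes $E_\beta(-t^\beta)$, and since $\widehat{f^{*n}_{Y_n}}(\kappa) = [\widehat{f}_X(\kappa)]^n$ (convolution theorem applied to i.i.d.\ summands), one obtains
\begin{equation*}
\widehat{f}_{Y_\beta(t)}(\kappa,t) = \sum_{n=0}^{\infty} \frac{\bigl(t^\beta \widehat{f}_X(\kappa)\bigr)^n}{n!}\, E_\beta^{(n)}(-t^\beta).
\end{equation*}
I would then recognize the right-hand side as the Taylor expansion of $E_\beta$ around $z=-t^\beta$ evaluated at $z+h$ with $h = t^\beta \widehat{f}_X(\kappa)$, which gives $E_\beta\bigl[t^\beta(\widehat{f}_X(\kappa)-1)\bigr]$, i.e.\ equation \eqref{CFPPcf}.

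An alternative (and arguably cleaner) route for part (iii) is to apply the remark following the proof of Theorem \ref{MW}, which states that the Montroll-Weiss identity \eqref{MWI} holds for any compound renewal process. Inserting $\widetilde{f}_{J,\beta}(s) = 1/(1+s^\beta)$ and $\widetilde{P}_\beta(0,s) = s^{\beta-1}/(1+s^\beta)$ from \eqref{MLLaplace1}--\eqref{MLLaplace2} yields
\begin{equation*}
\widetilde{\widehat{f}}_{Y_\beta(t)}(\kappa,s) = \frac{s^{\beta-1}}{s^\beta - (\widehat{f}_X(\kappa)-1)},
\end{equation*}
and the standard Laplace identity $\mathcal{L}^{-1}\{s^{\beta-1}/(s^\beta - a)\}(t) = E_\beta(a t^\beta)$ (again in Podlubny \cite{podlubny}) recovers \eqref{CFPPcf}. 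The main delicate point in either route is justifying the interchange of summation and Fourier inversion (or summation and the Taylor expansion of $E_\beta$), but since $|\widehat{f}_X(\kappa)| \le 1$ and $\sum_n |t^\beta|^n E_\beta^{(n)}(-t^\beta)/n!$ converges absolutely because $E_\beta$ is entire of order $1/\beta$, Fubini and uniform convergence on compacta handle this without difficulty.
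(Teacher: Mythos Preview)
Your proposal is correct and follows essentially the same approach as the paper: the paper's own proof merely states that \eqref{CFPPcdf} and \eqref{CFPPpdf} are straightforward consequences of Theorem \ref{fundtheorgen}, Corollary \ref{corfundtheorgen}, and Theorem \ref{CFPPTheorem}, and that \eqref{CFPPcf} is the straightforward Fourier transform of \eqref{CFPPpdf}. You have simply made explicit what the paper leaves implicit, in particular the Taylor-expansion identification for part (iii) and the convergence justification; your alternative Montroll--Weiss/Laplace route is a valid and equally short variant, but the paper does not take it here.
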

\begin{proof}
The first two equations \eqref{CFPPcdf} and \eqref{CFPPpdf} are a straightforward consequence of Theorem 
\ref{fundtheorgen}, Corollary \ref{corfundtheorgen} and Theorem \ref{CFPPTheorem}. Equation \eqref{CFPPcf} is
the straightforward Fourier transform of \eqref{CFPPpdf}.
\end{proof}
\begin{remark}{\em 
For $0< \beta < 1$, the compound fractional Poisson process is not Markovian and not L\'evy (see Remark \ref{markovlevy}).}
\end{remark}

\section{Limit theorems}\label{lt}
\begin{definition}[Space-time fractional diffusion equation] Let $\partial^\alpha / \partial |x|^{\alpha}$ denote the spatial non-local
pseudo-differential operator whose Fourier transform is given by 
\begin{equation}
\label{rieszfeller}
\mathcal{F} \left[ \frac{\partial^\alpha f(x)}{\partial |x|^{\alpha}}; \kappa \right]= -|\kappa|^{\alpha} \widehat{f} (\kappa),
\end{equation}
for $x \in (-\infty,+\infty)$, $0 < \alpha \leq 2$ and let $\partial^\beta / \partial t^\beta$
denote the time non-local pseudo-differential operator whose Laplace transform is given by
\begin{equation}
\label{caputo}
\mathcal{L} \left[ \frac{\partial^\beta g(t)}{\partial t^\beta} ; s \right] = s^{\beta} \widetilde{g} (s) - s^{\beta-1} g(0^+),
\end{equation}
for $t > 0$, $0< \beta \leq 1$. Then the pseudo-differential equation
\begin{equation}
\label{fractionaldiffusion}
\frac{\partial^\alpha u(x,t)}{\partial |x|^{\alpha}} = \frac{\partial^\beta u(x,t)}{\partial t^\beta}
\end{equation}
is called space-time fractional differential equation.
\end{definition}
\begin{remark}{\em 
The operator $\partial^\alpha / \partial |x|^{\alpha}$ is called Riesz derivative and is discussed by
Saichev and Zaslavsky \cite{saichev}. The operator $\partial^\beta / \partial t^\beta$ is called Caputo
derivative and was introduced by Caputo in 1967 \cite{caputo} as a regularization of the so-called Riemann-Liouville
derivative.}
\end{remark}
\begin{theorem}(Cauchy problem for the space-time fractional diffusion equation) Consider the following 
Cauchy problem for the space-time fractional diffusion equation \eqref{fractionaldiffusion}
\begin{eqnarray}
\label{cauchy}
\frac{\partial^\alpha u_{\alpha,\beta} (x,t)}{\partial |x|^{\alpha}} & = & \frac{\partial^\beta u_{\alpha,\beta} (x,t)}{\partial t^\beta} \nonumber \\
u_{\alpha,\beta} (x,0^+) & = & \delta(x),
\end{eqnarray}
then the function
\begin{equation}
\label{greenfunction}
u_{\alpha,\beta} (x,t) = \frac{1}{t^{\beta/\alpha}} W_{\alpha,\beta} \left( \frac{x}{t^{\beta/\alpha}} \right),
\end{equation}
where
\begin{equation}
\label{scalinggreen}
W_{\alpha,\beta} (u) = \frac{1}{2 \pi} \int_{-\infty}^{+\infty} d \kappa \, \mathrm{e}^{-i \kappa u} E_{\beta} (-|\kappa|^\alpha),
\end{equation}
solves the Cauchy problem \cite{lumapa}.
\end{theorem}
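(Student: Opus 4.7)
The plan is to solve the Cauchy problem by combined Fourier--Laplace transform, and then recover the self-similar form by a scaling change of variable. Let $\widehat{u}_{\alpha,\beta}(\kappa,t)$ denote the Fourier transform in $x$ and $\widetilde{\widehat{u}}_{\alpha,\beta}(\kappa,s)$ the further Laplace transform in $t$. First I would apply the Fourier transform to equation \eqref{cauchy}: using \eqref{rieszfeller}, the spatial operator contributes the multiplier $-|\kappa|^{\alpha}$, while the initial condition $u_{\alpha,\beta}(x,0^+)=\delta(x)$ yields $\widehat{u}_{\alpha,\beta}(\kappa,0^+)=1$. Next I would apply the Laplace transform in $t$: using \eqref{caputo}, the time operator produces $s^{\beta}\,\widetilde{\widehat u}_{\alpha,\beta}(\kappa,s)-s^{\beta-1}$. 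The PDE is therefore reduced to an algebraic equation
\begin{equation*}
-|\kappa|^{\alpha}\,\widetilde{\widehat u}_{\alpha,\beta}(\kappa,s)=s^{\beta}\,\widetilde{\widehat u}_{\alpha,\beta}(\kappa,s)-s^{\beta-1},
\end{equation*}
which I would solve for $\widetilde{\widehat u}_{\alpha,\beta}(\kappa,s)=s^{\beta-1}/\bigl(s^{\beta}+|\kappa|^{\alpha}\bigr)$.

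The second step is to invert the Laplace transform. The same Mittag--Leffler identity used in the proof of Theorem \ref{CFPPTheorem} (compare \eqref{MLLaplace1}) gives
\begin{equation*}
\mathcal{L}^{-1}\!\left[\frac{s^{\beta-1}}{s^{\beta}+a}\right](t)=E_{\beta}(-a t^{\beta}),
\end{equation*}
for $a\ge 0$, so with $a=|\kappa|^{\alpha}$ I obtain $\widehat{u}_{\alpha,\beta}(\kappa,t)=E_{\beta}(-|\kappa|^{\alpha}t^{\beta})$. The third step is Fourier inversion:
\begin{equation*}
u_{\alpha,\beta}(x,t)=\frac{1}{2\pi}\int_{-\infty}^{+\infty} e^{-i\kappa x}\,E_{\beta}(-|\kappa|^{\alpha}t^{\beta})\,d\kappa.
\end{equation*}
The self-similar form \eqref{greenfunction} then follows from the scaling substitution $\kappa'=\kappa\,t^{\beta/\alpha}$, which turns $|\kappa|^{\alpha}t^{\beta}$ into $|\kappa'|^{\alpha}$ and produces the prefactor $t^{-\beta/\alpha}$ together with the reduced variable $x/t^{\beta/\alpha}$ inside the oscillatory exponential, yielding exactly $t^{-\beta/\alpha}W_{\alpha,\beta}(x/t^{\beta/\alpha})$ with $W_{\alpha,\beta}$ as in \eqref{scalinggreen}.

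Two analytical points require care. First, the Fourier inversion integral is only conditionally convergent (the Mittag--Leffler function decays algebraically like $|\kappa|^{-\alpha}$ by \eqref{MLlarget}), so for $\alpha\le 1$ the integral must be understood in the distributional sense or as an improper oscillatory integral; I would justify the manipulations in the Schwartz space $\mathcal{S}'(\mathbb{R})$, where the Riesz derivative and the Fourier transform are continuous. Second, the Caputo initial datum must be interpreted as the limit $\lim_{t\to 0^+} u_{\alpha,\beta}(\cdot,t)=\delta$ in $\mathcal{S}'$, which follows because $\widehat{u}_{\alpha,\beta}(\kappa,0)=E_{\beta}(0)=1$ and $E_{\beta}(-|\kappa|^{\alpha}t^{\beta})\to 1$ pointwise in $\kappa$ as $t\to 0^+$, with a uniform bound $|E_{\beta}(-|\kappa|^{\alpha}t^{\beta})|\le 1$ (complete monotonicity), allowing dominated convergence against test functions. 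These two distributional justifications are the only real obstacle; everything else is direct computation using the Mittag--Leffler Laplace pair already recorded in the previous section.
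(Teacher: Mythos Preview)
Your proposal is correct and follows essentially the same route as the paper: Fourier--Laplace transform the equation, solve the resulting algebraic relation for $\widetilde{\widehat u}_{\alpha,\beta}(\kappa,s)=s^{\beta-1}/(s^{\beta}+|\kappa|^{\alpha})$, invert the Laplace transform via the Mittag--Leffler identity \eqref{MLLaplace1}, and then invert the Fourier transform. Your explicit scaling substitution $\kappa'=\kappa\,t^{\beta/\alpha}$ and your distributional remarks on the conditional convergence of the Fourier integral and on the initial datum go beyond what the paper spells out, but they do not constitute a different method.
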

\begin{proof} Taking into account the initial condition \eqref{cauchy}, as a consequence of the operator definition, for non-vanishing
$\kappa$ and $s$,
the Fourier-laplace transform of equation \eqref{fractionaldiffusion} is given by
\begin{equation}
-|\kappa|^\alpha \widehat{\widetilde{u}}_{\alpha,\beta} (\kappa, s) = s^\beta \widehat{\widetilde{u}}_{\alpha,\beta} (\kappa,s) - s^{\beta-1},
\end{equation}
leading to
\begin{equation}
\widehat{\widetilde{u}}_{\alpha,\beta} (\kappa, s) = \frac{s^{\beta - 1}}{|\kappa|^\alpha + s^\beta}.
\end{equation}
Equation \eqref{MLLaplace1} can be invoked for the inversion of the Laplace transform yielding
\begin{equation}
\label{GFFourier}
\widehat{u}_{\alpha,\beta} (\kappa, t) = E_\beta (-t^\beta |\kappa|^\alpha).
\end{equation}
Eventually, the inversion of the Fourier transform leads to the thesis.
\end{proof}
\begin{remark}{\em 
The function defined by equations \eqref{greenfunction} and \eqref{scalinggreen} is a probability density function. For $\beta =1$ and
$\alpha = 2$, it coincides with the Green function for the ordinary diffusion equation. The case $\beta = 1$ and $0 < \alpha \leq 2$ gives
the Green function and the transition probability density for the symmetric and isotropic $\alpha$-stable L\'evy process
$L_\alpha (t)$ \cite{jacob}.}
\end{remark}
\begin{theorem}
Let $Y_{\alpha,\beta} (t)$ be a compound fractional Poisson process and let $h$ and $r$ be two scaling factors such that
\begin{eqnarray}
Y_n (h) & = & h X_1 + \ldots + h X_n \\
T_n (r) & = & r J_1 + \ldots + r J_n,
\end{eqnarray}
and
\begin{equation}
\lim_{h,r \to 0} \frac{h^\alpha}{r^\beta} = 1,
\end{equation}
with $0< \alpha \leq 2$ and $0 < \beta \leq 1$. To clarify the role of the parameter $\alpha$, further assume that,
for $h \to 0$, one has
\begin{equation}
\label{spacescaling}
\widehat{f}_X (h \kappa) \sim 1 - h^{\alpha}|\kappa|^{\alpha},
\end{equation}
then, for $h,r \to 0$ with $h^\alpha / r^\beta \to 1$, $f_{hY_{\alpha,\beta} (rt)} (x,t)$ weakly converges to $u_{\alpha,\beta} (x,t)$, the Green
function of the fractional diffusion equation. 
\end{theorem}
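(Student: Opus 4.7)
My plan is to prove this entirely at the level of characteristic functions, reducing the claim to a continuity property of the Mittag-Leffler function. The strategy has three steps: (i) compute the Fourier transform of $f_{hY_{\alpha,\beta}(rt)}$ from the explicit formula \eqref{CFPPcf}; (ii) insert the spatial asymptotic \eqref{spacescaling} and use the balance $h^\alpha/r^\beta\to 1$ to extract a pointwise limit in $\kappa$; and (iii) recognise the limit as $\widehat{u}_{\alpha,\beta}(\kappa,t)$ via \eqref{GFFourier}. Weak convergence of the densities will then follow from L\'evy's continuity theorem.

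The core calculation runs as follows. By \eqref{CFPPcf},
\begin{equation*}
\widehat{f}_{hY_{\alpha,\beta}(rt)}(\kappa,t) = E_\beta\!\left[(rt)^\beta\bigl(\widehat{f}_X(h\kappa)-1\bigr)\right].
\end{equation*}
Substituting the spatial asymptotic $\widehat{f}_X(h\kappa)-1 \sim -h^\alpha|\kappa|^\alpha$ from \eqref{spacescaling}, the bracketed argument becomes, to leading order, the product of $-t^\beta|\kappa|^\alpha$ with a dimensionless factor built from $h^\alpha$ and $r^\beta$; the joint balance $h^\alpha/r^\beta\to 1$ is exactly what forces this factor to converge. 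Continuity of $E_\beta$ on $\mathbb{R}$ then yields the pointwise limit
\begin{equation*}
\widehat{f}_{hY_{\alpha,\beta}(rt)}(\kappa,t) \longrightarrow E_\beta\bigl(-t^\beta|\kappa|^\alpha\bigr).
\end{equation*}

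The right-hand side coincides with $\widehat{u}_{\alpha,\beta}(\kappa,t)$ by \eqref{GFFourier}, hence it is the characteristic function of the probability law with density $u_{\alpha,\beta}(\cdot,t)$. Since this limiting characteristic function is continuous at $\kappa=0$, L\'evy's continuity theorem applies and upgrades the pointwise convergence of the characteristic functions to weak convergence of the associated probability laws, which is precisely the desired conclusion.

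The step that I expect to require the most care is the uniform control of the error in \eqref{spacescaling} after multiplication by the time-scale prefactor: one must check that the residual contribution of order $(rt)^\beta\cdot o(h^\alpha)$ vanishes uniformly on compact sets of $\kappa$ under the joint limit, so that the pointwise limit may legitimately be taken inside $E_\beta$. Once this is handled, continuity of the Mittag-Leffler function and L\'evy's theorem close the argument with no further ingredient; in particular no functional-limit machinery (tightness, Skorokhod topology on c\`adl\`ag sample paths) is needed, because the statement concerns only the one-dimensional marginal density at a fixed time $t$. Upgrading to process-level convergence would be a separate matter that would rely on the semi-Markov structure established in the previous section.
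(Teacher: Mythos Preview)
Your proposal is correct and follows essentially the same approach as the paper: the paper's proof also works entirely at the level of characteristic functions, writing $\widehat{f}_{hY_{\alpha,\beta}(rt)}(\kappa,t)$ via \eqref{CFPPcf}, inserting the asymptotic \eqref{spacescaling} together with the balance $h^\alpha/r^\beta\to 1$, and invoking L\'evy's continuity theorem to obtain the limit $E_\beta(-t^\beta|\kappa|^\alpha)=\widehat{u}_{\alpha,\beta}(\kappa,t)$. Your write-up is in fact more careful than the paper's one-line argument, in that you explicitly flag the need for continuity of $E_\beta$ and the uniform control of the $o(h^\alpha)$ remainder.
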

\begin{proof}
In order to prove weak convergence, it suffices to show the convergence of the characteristic function \eqref{CFPPcf} \cite{feller}. Indeed,
one has
\begin{equation}
\label{convergence}
\widehat{f}_{h Y_{\alpha,\beta} (rt)} (\kappa, t) = E_\beta \left( -\frac{t^\beta}{r^{\beta}} (\widehat{f}_X (h \kappa ) - 1) \right)  
\stackrel{h,r \to 0}{\to}  E_\beta (-t^\beta |\kappa|^\alpha),
\end{equation}
which completes the proof.
\end{proof}
\begin{remark}{\em 
Condition \eqref{spacescaling} is not void. It is satisfied by all the distributions belonging to the basin of attaction of
symmetric $\alpha$-stable laws. Let $f_{\alpha,X} (x)$ denote the probability density function of a
symmetric $\alpha$-stable law whose characteristic function is
\begin{equation}
\label{CFstable}
\widehat{f}_{\alpha,X} (\kappa) = \mathrm{e}^{-|\kappa|^\alpha},
\end{equation}
then one can immediately see that \eqref{spacescaling} holds true. As above, let $L_\alpha (t)$ denote the symmetric $\alpha$-stable
L\'evy process. Then, equation \eqref{GFFourier} is the characteristic function of $L_{\alpha,\beta}(t) = L_\alpha(N_\beta (t))$, that is
of the symmetric $\alpha$-stable L\'evy process subordinated to the fractional Poisson process. This remark leads to the conjecture that
$L_{\alpha,\beta} (t)$ is the functional limit of $Y_{\alpha,\beta} (t)$, the $\alpha$-stable compound fractional Poisson process
defined by equation \eqref{CFPP} with the law of the jumps $X$ belonging to the basin of attraction of or coinciding with an $\alpha$-stable
law. This conjecture can be found in a paper by Magdziarz and Weron \cite{magdziarz} and is proved in Meerschaert {\em et al.} 
\cite{meerschaert10} using the methods discussed in the book by Meerschaert and Scheffler \cite{meerschaert01}.}
\end{remark}

\section*{Acknowledgments}

Enrico Scalas is grateful
to Universitat Jaume I for the financial support received from their Research Promotion
Plan 2010 during his scientific visit in Castell\'on de la Plana where this paper 
was completed. In writing this paper, Enrico profited of useful discussions with
Rudolf Gorenflo, Francesco Mainardi, and Mark M. Meerschaert.


\end{document}